\newtheorem{thm}{Theorem}
\newtheorem{cor}[thm]{Corollary}
\newtheorem{prop}[thm]{Proposition}
\numberwithin{equation}{section}
 \DeclareMathOperator{\RE}{Re}
 \DeclareMathOperator{\IM}{Im}
 \newcommand{\eps}{\varepsilon}
 \newcommand{\To}{\longrightarrow}
 \newcommand{\s}{\mathcal{S}}
 \newcommand{\M}{\mathcal{M}}
 \newcommand{\J}{\mathcal{J}}
 \newcommand{\K}{\mathcal{K}}
 \newcommand{\Complex}{\mathbb{C}}
 \newcommand{\G}{\mathcal{G}}
 \newcommand{\Lom}{\mathcal{L}}
 \newcommand{\abs}[1]{\left\vert#1\right\vert}
 \newcommand{\set}[1]{\left\{#1\right\}}
 \newcommand{\seq}[1]{\left<#1\right>}
 \newcommand{\norm}[1]{\left\Vert#1\right\Vert}
\author{ Aik. Aretaki and J. Maroulas\footnote{Department of Mathematics, National\, Technical \,University \,of
Athens, Zografou Campus, Athens 15780, Greece. E-mail address:
maroulas@math.ntua.gr.}}
\title{Investigating the Numerical Range \\of Non Square Matrices}
\begin{document}
\maketitle
{\small\textbf{Abstract.}
A presentation \,of \,numerical\, range for rectangular matrices

is\, undertaken\,\,\,in \,this \,paper, introducing\, two \,different\, definitions\,\,\,and

elaborating basic properties. Then we are extended to the treatment \,of

rank-k numerical range.}\\\\
{\small\textbf{Key words}: numerical range, projectors, matrix norms, singular values.\\
\textit{AMS Subject Classifications:} 15A60, 15A18, 47A12, 47A30.}



\section{Introduction}
Let $\M_{m,n}(\Complex)$ be the set of matrices $A=[a_{ij}]_{i,j=1}^{m,n}$
with entries $a_{ij}\in \Complex$. For $m=n$, the  set
\begin{equation}\label{w1}
F(A)=\set{ x^{*}Ax : x\in \Complex^{n}, \norm x_{2} = 1}
\end{equation}
is the well known \textit{numerical range} or \textit{field of
values} of $A$, for which basic pro\-perties can be found in
~\cite{H.J.T}, \cite{Rao} and ~\cite[chapter 22]{Halmos}. Equivalently, we say
that $F(A)=f(\mathcal{S}_{n})$, where $\mathcal{S}_{n}$ is the unit sphere of $\Complex^{n}$
and the function $f$ on $\mathcal{S}_{n}$ is defined by the bilinear mapping
$g:\mathcal{S}_{n}\times\mathcal{S}_{n}\rightarrow\Complex$, such that $f(x)=g(x,x)=x^{*}Ax$.
It is remarkable that $F(A)$ is closed and convex set and contains the set of eigenvalues
of $A$.

For $m\neq n$, the motivation herein is to investigate
\textit{''how the numerical range w(A) can be defined for a rectangular matrix
$A$''} based on the inner product and to develop some basic and
fundamental properties. As we may see, the results vary  and the approach is undertaken in two ways,
firstly we consider a natural extension of \eqref{w1} and on the
other hand, introducing the idea of restriction or extension of
dimensions of $A$, we are led to the relationship of $w(A)$ with the numerical range
of square matrices via projection matrices. Hence, generalizing  the
notion of definition \eqref{w1}, we consider the bilinear mapping
$g:\mathcal{S}_{n}\times\mathcal{S}_{m}\rightarrow\Complex$, $g(x,y)=y^{*}Ax$, which gives
rise to the numerical range of $m\times n $ matrix $A$, as the set
\begin{equation}\label{w2}
w(A)=\set{ y^{*}Ax : x\in \Complex^{n}, y\in \Complex^{m}, \norm
x_{2} = \norm y_{2} = 1}
\end{equation}
which is equal to $g(\mathcal{S}_{n}\times\mathcal{S}_{m})$. Note that for $m>n$ we have
\begin{eqnarray*}
F(\begin{bmatrix}
    A & 0_{m\times(m-n)} \\ \end{bmatrix}) & = &\set{ y^{*}Ax : y=\begin{bmatrix} x \\ \omega \\ \end{bmatrix}
    \in \Complex^{m}, x\in \Complex^{n}, \norm y_{2} = 1} \\
                                           & = & \norm x_{2}\set{ y^{*}A\frac{x}{\norm x_{2}} : y=\begin{bmatrix} x \\ \omega \\ \end{bmatrix}
    \in \Complex^{m}, x\in \Complex^{n}, \norm y_{2} = 1}
\end{eqnarray*}
\quad\quad\quad\quad\quad\quad\quad\quad\quad\,$\subseteq \,\,\,\, w(A).$

Proceeding, it is proved that $w(A)$ is identified with the circular
disc $\set{z\in \Complex : \abs z \leq \norm A _{2}}$,\,\, since \,the\,
unit\, vectors\,\, $x$ and $y$\, belong \,to different\,\, dimensional spaces. An
approximation of $w(A)$ from within, following, is shown,
assuming that the vectors $x, y$ in \eqref{w2} belong to subspaces
$\mathcal{F}\subset\Complex^{n}$ and
$\mathcal{G}\subset\Complex^{m}$, respectively. Recently, has been proposed
~\cite{Psarrakos} as numerical range of $A\in \M_{m,n}$ \textit{with
respect to matrix} $B\in\M_{m,n}$ the compact and convex set
\begin{equation}\label{w3}
w_{\norm{\cdot}}(A,B)=\bigcap_{z_{0}\in \Complex}{\set{z\in\Complex
: \abs{z-z_{0}}\leq\norm{A-z_{0}B}}}.
\end{equation}
The \eqref{w3} is an extension of definition of $F(A)$ for square
matrices in \cite{Bonsall} and clearly the numerical range, as in
\cite{Bonsall}, \cite{Bon-Dun-II}, is based on the notion of matrix
norm. In \cite{Psarrakos} has been proved that $w_{\norm{\cdot}}(A,B)$ coincides with the disc
\begin{equation}\label{w4}
\set{z\in\Complex : |z-\frac{\seq{A,B}}{\norm B^{2}}|\leq
\|{A-\frac{\seq{A,B}}{\norm B^{2}}B\|\sqrt{1-\norm B^{-2}}}}
\end{equation}
when $\norm B\geq1$ and the matrix norm $\norm \cdot$ is induced by
the inner product $\seq{\cdot,\cdot}$. The complicated formulation
of numerical range of $A$ and the necessity of independence of
$w_{\norm{\cdot}}(A,B)$ by the matrix $B$ in \eqref{w3} and \eqref{w4},
are signified in section 2.

Another proposal for the definition of numerical range for
rectangular matrices, which will be further exploited in section 3,
is the projection onto the lower or the higher dimensional subspace.
Let $m>n$ and the vectors $v_{1}, \ldots, v_{n}$ of $\Complex^{m}$
be orthonormal basis of $\Complex^{n}$. Clearly, the matrix
$P=HH^{*}$, where $H=\left[
                              \begin{array}{ccc}
                                v_{1} & \ldots & v_{n} \\
                              \end{array}
                            \right]$, is an orthogonal projector of
$ \Complex^{m}\To\Complex^{n} $. In this case, for $A\in\M_{m,n}$,
we define \emph{with respect to} $H$:
\begin{equation}\label{w5}
w_{l}(A)=F(H^{*}A)
\end{equation}
where obviously $H^{*}A$ is $n\times n$ matrix. Moreover, the
vector $y=Hx\in\Complex^{m}$ is projected onto $\Complex^{n}$ along
$\mathcal{K}$, where $\mathcal{K}$ is any direct complement of
$\Complex^{n}$, i.e. $\Complex^{m}=\Complex^{n}\oplus\mathcal{K}$.
Since, $\norm y =(x^{*}H^{*}Hx)^{1/2}=\norm x$, instead of
\eqref{w5}, it can also be provided a treatment of the numerical
range $w_{h}(A)$ of higher dimensional $m\times m$ matrix $AH^{*}$,
namely,
\begin{equation}\label{w6}
w_{h}(A)=F(AH^{*}).
\end{equation}
Similarly, if $m<n$, then $x=Hy$ and consequently
\begin{equation}\label{w7}
w_{l}(A)=F(AH),\quad  w_{h}(A)=F(HA).
\end{equation}

Apparently, by \eqref{w5}-\eqref{w7}, the numerical range of
$A\in\M_{m,n}$ via the projection of unit vectors onto
$\Complex^{n}$ or $\Complex^{m}$ is referred to the numerical range
of square matrix, indicating obviously the convexity of $w_{l}(A)$
and $w_{h}(A)$. Clearly, for $m=n$ and $H=I$, $w_{l}(A)$ and $w_{h}(A)$ are reduced to the
classical numerical range $F(A)$ in \eqref{w1}. In \eqref{w5} and \eqref{w6}, if $A$ is orthonormal
$(A^{*}A=I_{n})$, for $H=A$, clearly
\[
w_{h}(A)=[0,\sigma_{\max}(A)]=[0,1], \quad w_{l}(A)=[\sigma_{\min}(A), \sigma_{\max}(A)]=\set1
\]
where $\sigma_{\max}(\cdot)$ and $\sigma_{\min}(\cdot)$ \,denote the \,maximum\, and \,minimum
singular values of matrix. Some additional properties of these sets are exposed in section
3 including the notion of sharp point.

Accepted the definition \eqref{w2}, an equivalent representation of $w(A)$ in \eqref{w2} is
\begin{equation}\label{w8}\begin{split}
 w(A)=&\set{ z\in\Complex : PAQ = zS, \,\,where \,\,
P=yy^{*}, Q=xx^{*}, S=yx^{*}\right.\\
&\quad\quad\quad\quad\quad\quad\quad\quad\quad\left. \,\,and\,\,
x\in \Complex^{n}, y\in \Complex^{m}, \norm x_{2} = \norm y_{2} = 1}
\end{split}\end{equation}
In \eqref{w8} the matrices $P, Q$ are rank-1 orthogonal projections
of $\Complex ^{m}$ and $\Complex ^{n}$ and $S$ satisfies the
equation $PXQ=X$. In this way, in section 4 we are led to the
generalization of \textit{rank-k numerical range} for square
matrices
\begin{equation}\label{w9}\begin{split}
 \Lambda_{k}(A)&=\set{\lambda\in\Complex : PAP=\lambda P\,\,for\,\,some\,\,rank-k\right.\\
 &\quad\quad\quad\quad\quad\quad\quad\quad\quad\quad\,\,\left.orthogonal\,\,projection\,\,P}
\end{split}\end{equation}
which has been presented and extensively studied by Choi
\textit{et} \textit{al} in \cite{Choi},\cite{C-H-K-Z}, \cite{C-K-Z-q}, \cite{C-K-Z}
and later by other researchers in \cite{Hugo}, \cite{Li-Sze}, \cite{Poon-Li-Sze} and \cite{L-R}.
In this paper, for the $m\times n$ matrix $A$ and a
positive integer $k\geq 1$, \textit{the rank-k numerical range of
$A$} is defined by the set
\begin{equation}\label{w10}\begin{split}
\!\!\phi_{k}(A)&=\set{ z\in \Complex : PAQ = zS, \,for\,\, some\,\, rank-k\,\,orthogonal\right. \\
&\quad\quad\quad\quad\quad\quad\quad\quad\quad\quad\left.projections\,\,\ P\,\, and\,\, Q\,\,and\,\, S=PSQ}
\end{split}\end{equation}
For $m=n$ and $P=Q$, $\phi_{k}(A)=\Lambda_{k}(A)$.

These sets satisfy, analogously to $\Lambda_{k}(A)$, the inclusion
relationship
\[
w(A)=\phi_{1}(A)\supseteq \phi_{2}(A)\supseteq \ldots
\supseteq \phi_{\tau}(A)
\]
where $\tau=\min{\set{m,n}}$ and the proof is established in the final section
4. Then it is proved that $\phi_{k}(A)$, under constraints for the index $k$, is a circular ring
or a disc, presenting the non-emptiness and the convexity of the set for special cases.


\section{Properties of $w(A)$}

Recalling the definition of $w(A)$ in \eqref{w2}, we readily
recognize the property
$$w(kA)=kw(A).$$
The convexity of $w(A)$ is confirmed indirectly by the next statement.

\begin{prop}
For each $m\times n$ matrix $A$, $w(A)=\set{z \in\Complex : \abs
z\leq \norm A_{2}}$.
\end{prop}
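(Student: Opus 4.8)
The plan is to establish the two inclusions separately. The inclusion $w(A)\subseteq\set{z:\abs z\le\norm A_2}$ is routine: for any unit vectors $x\in\Complex^n$ and $y\in\Complex^m$, the Cauchy--Schwarz inequality gives $\abs{y^*Ax}\le\norm y_2\,\norm{Ax}_2\le\norm A_2$, so every element of $w(A)$ lies in the closed disc of radius $\norm A_2$. All the content is in the reverse inclusion, and this is where the hypothesis $m\neq n$ becomes essential.

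For the reverse inclusion the cleanest route is the singular value decomposition. Writing $A=U\Sigma V^*$ with $U\in\M_{m,m}(\Complex)$ and $V\in\M_{n,n}(\Complex)$ unitary and $\Sigma$ the $m\times n$ matrix of singular values $\sigma_1=\norm A_2\ge\sigma_2\ge\cdots\ge 0$, I would first observe that $y^*Ax=(U^*y)^*\Sigma(V^*x)$, while $U^*$ and $V^*$ carry the unit spheres of $\Complex^m,\Complex^n$ bijectively onto themselves; hence $w(A)=w(\Sigma)$. Now $v^*\Sigma u=\sum_j\sigma_j\overline{v_j}u_j$, and since $m\neq n$ at least one of $\Complex^n,\Complex^m$ has dimension at least two. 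Assume $n\ge 2$ and fix $v=e_1\in\Complex^m$; then $v^*\Sigma u=\sigma_1 u_1$, and as $u$ ranges over the unit sphere of $\Complex^n$ its first coordinate $u_1$ sweeps the entire closed unit disc (take $u=(w,\sqrt{1-\abs w^2},0,\dots,0)$ for any $\abs w\le 1$). Thus $\sigma_1 u_1$ sweeps the closed disc of radius $\sigma_1=\norm A_2$, giving $\set{z:\abs z\le\norm A_2}\subseteq w(\Sigma)=w(A)$. Symmetrically, if instead $m\ge 2$, I would fix $u=e_1\in\Complex^n$ and vary $v$, so that $\sigma_1\overline{v_1}$ sweeps the same disc.

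The step I expect to be the crux is precisely this reverse inclusion, and specifically the realisation that $w(A)$ fills the whole disc rather than only its boundary circle. The mechanism is exactly the dimension mismatch: because $x$ and $y$ vary independently and one ambient space has dimension at least two, a single coordinate of the free vector can be assigned any modulus in $[0,1]$, and this is what produces the interior of the disc. A coordinate-free alternative, which I would mention as a check, is to note that $w(A)$ is invariant under multiplication by unimodular scalars (replace $y$ by $e^{-i\theta}y$), so it is a union of circles centred at the origin; the set of attained moduli is the continuous image of the connected compact set $\s_n\times\s_m$, hence a closed interval with maximum $\norm A_2$ and minimum $0$, the value $0$ being attained — again thanks to $m\neq n$ — by choosing $y$ orthogonal to $Ax$ in the larger space.
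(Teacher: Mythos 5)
Your proof is correct, but it takes a genuinely different route from the paper's, and in fact a more robust one. The paper argues directly on $A$ (assuming $m>n$): it obtains $0\in w(A)$ from linear dependence of the rows, gets $w(A)\subseteq\set{z:\abs z\le\norm A_{2}}$ from Cauchy--Schwarz exactly as you do, and then, for $z=re^{i\theta}$ with $0<r\le\norm A_{2}$, asserts that the continuous function $f(x)=\norm{Ax}_{2}$ attains the value $r$ at some unit vector $\hat x$, and takes $\hat y=A\hat x/(\norm{A\hat x}_{2}e^{i\theta})$ parallel to $A\hat x$; boundary points are then reached via singular vectors. The weak point there is the claimed surjectivity of $f$ onto $(0,\norm A_{2}]$: the actual range of $f$ on $\s_{n}$ is $[\sigma_{\min}(A),\sigma_{\max}(A)]$, so when $A$ has full column rank with $\sigma_{\min}(A)>0$ (e.g.\ $A=\begin{bmatrix}1&0\end{bmatrix}^{T}$, where $\norm{Ax}_{2}\equiv1$) no such $\hat x$ exists, and a $\hat y$ kept parallel to $A\hat x$ can never produce moduli below $\sigma_{\min}(A)$. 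Your mechanism supplies exactly what is missing: after the reduction $w(A)=w(\Sigma)$ by the singular value decomposition, the interior of the disc is filled not by intermediate values of $\norm{Ax}_{2}$ but by the freedom of the second vector --- the coordinate $u_{1}$ (or $\overline{v_{1}}$) of the free unit vector sweeps the whole closed unit disc because the relevant dimension is at least $2$, which $m\neq n$ guarantees. It is worth noting that the paper explicitly remarks that the SVD does not ``specially simplify'' the proof; your write-up suggests otherwise, since the SVD argument is both shorter and gap-free.

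Two small points. First, your case split ($n\ge2$ versus $m\ge2$) is what makes the argument complete, so keep it explicit. Second, in your coordinate-free check at the end, when $m<n$ the modulus $0$ is attained by taking $x\in\ker A$ rather than by choosing $y$ orthogonal to $Ax$; the phrase ``in the larger space'' should be read accordingly.
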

\begin{proof}
Let $m>n$. Since the rows $\tilde{a}_{1}, \ldots, \tilde{a}_{n}$
of $A$ are linear dependent, we consider the unit vector $y_{0}$
such that $y_{0}^{*}A=0$. Then, for a unit vector $x$, we have
$y_{0}^{*}Ax=0$, i.e. $0\in w(A)$. Also, due to Cauchy-Schwarz
inequality, we obtain
\[
\abs{y^{*}Ax} = \abs{\seq{Ax,y}} \leq \norm{Ax}_{2} \norm y_{2} =
\norm{Ax}_{2} \leq \max_{\norm x_{2}=1}{\norm{Ax}_{2}}=\norm A_{2} =
\sigma_{\max}(A).
\]
If $z=re^{i\theta} \in \set{z : \abs z \leq \norm A_{2}}$, obviously
$0<r\leq \norm A_{2}$. Evenly, there exists a unit vector $\hat{x}$
such that $\norm{A\hat{x}}_{2}=r$, since the function $f(x)=
\norm{Ax}_{2} : \s_{n} \To (0,\norm A_{2}]$ is continuous, where
$\s_{n}$ is the compact unit sphere of $\Complex^{n}$. Thus, for
$\hat{y}=A\hat{x}/(\norm{A\hat{x}}_{2}e^{i\theta}) $, clearly $\norm
{\hat{y}}_{2}=1$ and
\[
\hat{y}^{*}A\hat{x} =
\frac{\hat{x}^{*}A^{*}A\hat{x}}{\norm{A\hat{x}}_{2}e^{-i\theta}}=\norm{A\hat{x}}_{2}e^{i\theta}=re^{i\theta}=z.
\]

Moreover, the boundary $\partial w(A)=\set{z: \abs z=\norm A_{2}}$
is attained, since, by the unit eigenvectors
$A^{*}Ax=\sigma_{\max}^{2}x$ and $AA^{*}y=\sigma_{\max}^{2}y$, we
receive the point
$\abs{y^{*}Ax}=\abs{y^{*}(\sigma_{\max}y)}=\sigma_{\max}$. Due to the fact that the
singular values of $A$ and $e^{i\theta}A$ are identical, the points
$y^{*}(e^{i\theta}A)x$ are also boundary points of the circular disc
$\set{z : \abs z\leq \norm A_{2}}$.
\end{proof}

We remark that the proof is not specially simplified if we consider the singular value decomposition of $A$
and the invariant under unitary equivalence.

\begin{cor}
Let $A\in\M_{m,n}$ and $z=y^{*}Ax\in w(A)$. Then we have
\begin{description}
  \item[ \,\,I.]
  $z\in F(\begin{bmatrix}
            0 & 2A \\
            0_{n\times m} & 0 \\
          \end{bmatrix})$  and corresponds to unit
vector $\omega=\frac{1}{\sqrt{2}}\begin{bmatrix}
                                   y \\
                                   x \\
                                 \end{bmatrix}$,
  \item[ II.] $w(A)=\bigcap_{0\leq\theta\leq2\pi}{\set{half\,\, plane\,\,\,:
  e^{-i\theta}\set{z: \RE
  z\leq\sigma_{\max}(A)}}},$
 \item[III.]
 if $A=\textbf{a}\in\Complex^{n}$,
  $w(\textbf{a})=\mathcal{D}(0,\norm{\textbf{a}}_{2})$.
\end{description}
\end{cor}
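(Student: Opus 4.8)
The plan is to deduce all three parts from Proposition 1, which already identifies $w(A)$ with the disc $\set{z : \abs z\leq\sigma_{\max}(A)}$, supplemented by a single explicit block computation for part I.

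For part I I would verify the stated witness directly. Setting $M=\begin{bmatrix} 0 & 2A \\ 0_{n\times m} & 0 \end{bmatrix}\in\M_{m+n,m+n}(\Complex)$ and $\omega=\frac{1}{\sqrt2}\begin{bmatrix} y \\ x\end{bmatrix}$, I first note $\norm{\omega}_2^2=\tfrac12(\norm y_2^2+\norm x_2^2)=1$, so $\omega$ is a unit vector in $\Complex^{m+n}$. Since $M\omega=\frac{1}{\sqrt2}\begin{bmatrix} 2Ax \\ 0\end{bmatrix}$, a one-line multiplication gives $\omega^*M\omega=\tfrac12\,y^*(2Ax)=y^*Ax=z$, which is exactly the assertion $z\in F(M)$ attained at $\omega$. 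The only points requiring attention are the block sizes and the factor $2$, which precisely cancels the $\tfrac12$ produced by normalising $\omega$.

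For part II I would invoke Proposition 1 to replace $w(A)$ by $\set{z:\abs z\leq\sigma_{\max}(A)}$ and then establish the purely geometric fact that this disc is the intersection of the indicated rotated half-planes. The key observation is that $z\in e^{-i\theta}\set{w:\RE w\leq\sigma_{\max}(A)}$ precisely when $\RE(e^{i\theta}z)\leq\sigma_{\max}(A)$; since $\max_{0\leq\theta\leq2\pi}\RE(e^{i\theta}z)=\abs z$, the point $z$ belongs to every half-plane iff $\abs z\leq\sigma_{\max}(A)$, i.e.\ iff $z\in w(A)$. I expect this to be the part that needs the most care, mainly in reading the rotation convention correctly; once the membership condition is rewritten as $\RE(e^{i\theta}z)\leq\sigma_{\max}(A)$, the tangent-line picture of a disc as the intersection of its supporting half-planes makes the identity transparent.

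For part III the claim is just the specialisation of Proposition 1 to a single column. Regarding $\mathbf a\in\Complex^n$ as the $n\times1$ matrix $A$, we have $\sigma_{\max}(\mathbf a)=\norm{\mathbf a}_2$, so Proposition 1 yields $w(\mathbf a)=\set{z:\abs z\leq\norm{\mathbf a}_2}=\mathcal D(0,\norm{\mathbf a}_2)$, with no further computation needed.
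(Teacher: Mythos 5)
Your proposal is correct in all three parts, and parts II and III follow the paper's own reasoning: III is the identical specialisation $\sigma_{\max}(\mathbf{a})=\norm{\mathbf{a}}_{2}$, while your argument for II is a carefully written-out version of what the paper compresses into one sentence (that $\partial w(A)$ consists of the points $e^{i\theta}\sigma_{\max}(A)$); your rewriting of membership in the rotated half-plane as $\RE(e^{i\theta}z)\leq\sigma_{\max}(A)$ together with $\max_{\theta}\RE(e^{i\theta}z)=\abs{z}$ is exactly the supporting-half-plane characterisation of the disc, and it is a cleaner justification than the paper gives. Part I is where you genuinely diverge: you verify the witness directly, computing $\omega^{*}M\omega=\tfrac12\,y^{*}(2Ax)=z$ for the unit vector $\omega=\tfrac{1}{\sqrt2}\begin{bmatrix} y \\ x \end{bmatrix}$, whereas the paper goes through the Cartesian decomposition, first showing $\RE z=\omega^{*}\begin{bmatrix} 0 & A \\ A^{*} & 0 \end{bmatrix}\omega$ and $\IM z=\omega^{*}\begin{bmatrix} 0 & -iA \\ iA^{*} & 0 \end{bmatrix}\omega$ and then adding the Hermitian and $i\times$(skew-type) pieces to reassemble $\begin{bmatrix} 0 & 2A \\ 0 & 0 \end{bmatrix}$. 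Your route is shorter and self-contained; the paper's detour has the side benefit of exhibiting $\RE z$ and $\IM z$ as values of the numerical ranges of the Hermitian dilations $\begin{bmatrix} 0 & A \\ A^{*} & 0 \end{bmatrix}$ and $\begin{bmatrix} 0 & -iA \\ iA^{*} & 0 \end{bmatrix}$, a device the paper reuses later (Proposition 13, part III, for the rank-$k$ range). Either argument fully establishes the corollary.
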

\begin{proof}
\textbf{I.} By Proposition 1, we have $\RE
z\in [-\sigma_{\max}(A),\sigma_{\max}(A)]$ and after some algebraic
manipulations, we obtain $\RE z=\omega^{*}\begin{bmatrix}
                                               0 & A \\
                                               A^{*} & 0 \\
                                             \end{bmatrix}\omega$, where
$\omega=\frac{1}{\sqrt{2}}\begin{bmatrix}
                            y \\
                            x \\
                          \end{bmatrix}$.
Similarly, $\IM z=\omega^{*}\begin{bmatrix}
                              0 & -iA \\
                              iA^{*} & 0 \\
                            \end{bmatrix}\omega$, i.e. $\IM z\in[-i\sigma_{\max}(A),i\sigma_{\max}(A)]$
and consequently $z=\omega^{*}\begin{bmatrix}
                            0 & 2A \\
                            0 & 0 \\
                          \end{bmatrix}\omega$.\\
\textbf{II.} The graph of $\partial w(A)$ is
constructed only by the values $e^{i\theta}\sigma_{\max}(A)$.\\
\textbf{III.} For $A=\textbf{\emph{a}}\in\Complex^{n}$, the unique
singular value of $\textbf{\emph{a}}$ is
$\sigma=\norm{\textbf{\emph{a}}}_{2}$.
\end{proof}

\begin{cor}
Let $A, B\in\M_{m,n}$, then holds:
\begin{description}
  \item[\,\,\,\,\textbf{I.}]
  $w(A)=w(A^{*})$,
  \item[\,\,\textbf{II.}]
  $w(\hat{A})\subseteq w(A)$, for any $p\times q$ submatrix $\hat{A}$ of $A$,
  \item[\textbf{III.}]
  $w(diag(A,B))=\max{\set{w(A),w(B)}}$, where $A\in\M_{m,n}$, $B\in\M_{n,m}$,
  \item[\textbf{IV.}]
  $w(A+B)\subseteq w(A)+w(B)$,
  \item[\,\,\textbf{V.}]
  $w(U^{*}AV)=w(A)$, where $U\in\M_{m}, V\in\M_{n}$ are unitary matrices.
\end{description}
\end{cor}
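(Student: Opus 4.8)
The plan is to prove each of the five parts by reducing to the characterization $w(A)=\{z:\abs z\le\norm A_2\}$ from Proposition~1, so that every statement becomes a comparison of operator norms (equivalently, maximum singular values). Since $w(A)$ is a disc centered at the origin determined entirely by $\norm A_2$, most of these identities reduce to showing equality or inequality of the relevant $2$-norms.

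For part \textbf{I}, I would observe that $\norm{A^*}_2=\norm A_2$ because $A$ and $A^*$ share the same nonzero singular values; hence the two discs coincide. For part \textbf{II}, a $p\times q$ submatrix $\hat A$ satisfies $\norm{\hat A}_2\le\norm A_2$, since $\hat A=R^*AC$ for suitable matrices $R,C$ with orthonormal columns (coordinate projections), and such compressions do not increase the operator norm; the disc for $\hat A$ is therefore contained in that for $A$. For part \textbf{V}, unitary invariance of the singular values gives $\norm{U^*AV}_2=\norm A_2$ immediately, so the discs are identical. Part \textbf{IV} follows from $\norm{A+B}_2\le\norm A_2+\norm B_2$ (triangle inequality for the operator norm) together with the fact that the sum of two origin-centered discs of radii $r_1,r_2$ is the origin-centered disc of radius $r_1+r_2$; this gives $w(A+B)\subseteq w(A)+w(B)$.

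Part \textbf{III} is the one requiring the most care, since the notation $\max\{w(A),w(B)\}$ must first be interpreted: because each $w(\cdot)$ is a disc about the origin, I read this as the larger of the two discs, i.e. the disc of radius $\max\{\norm A_2,\norm B_2\}$. The block matrix $\mathrm{diag}(A,B)$ (with $A\in\M_{m,n}$, $B\in\M_{n,m}$) is $(m+n)\times(m+n)$, and its singular values are the union of those of $A$ and those of $B$; consequently $\norm{\mathrm{diag}(A,B)}_2=\max\{\norm A_2,\norm B_2\}$. Applying Proposition~1 to the block matrix then yields the claimed disc. The key structural fact I would verify carefully is that $\sigma_{\max}$ of a block-diagonal matrix is the maximum of the $\sigma_{\max}$ of its blocks, which follows from $(\mathrm{diag}(A,B))^*\mathrm{diag}(A,B)=\mathrm{diag}(A^*A,B^*B)$ and the corresponding spectral decomposition.

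The main obstacle I anticipate is interpretive rather than computational: clarifying the meaning of $\max\{w(A),w(B)\}$ in part \textbf{III} and confirming that the block dimensions are chosen precisely so that $\mathrm{diag}(A,B)$ is square and well defined. Once each claim is translated into a statement about $\norm\cdot_2$, the proofs are short applications of standard singular-value facts (unitary invariance, submultiplicativity of compressions, the triangle inequality, and the block-diagonal spectrum), combined with Proposition~1 to pass back from norms to the numerical-range discs.
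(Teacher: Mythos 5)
Your proposal is correct and follows essentially the same route as the paper: every part is reduced via Proposition~1 to a statement about $\norm{\cdot}_{2}$, using respectively the equality $\norm{A^{*}}_{2}=\norm{A}_{2}$, the submatrix inequality $\|\hat{A}\|_{2}\leq\norm{A}_{2}$, the identity $\norm{\mathrm{diag}(A,B)}_{2}=\max\set{\norm{A}_{2},\norm{B}_{2}}$, the triangle inequality, and unitary invariance. Your added care in interpreting $\max\set{w(A),w(B)}$ as the larger disc and in verifying the block-diagonal singular-value fact only makes explicit what the paper leaves implicit.
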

\begin{proof}
Statement (I) is an immediate consequence of
Proposition 1 and\, (II)\, is implied \,using\, the \,inequality
$\|\hat{A}\|_{2}\leq\norm A_{2}$\, (\cite{H.J.T}, Cor. 3.1.3,
p.149), where \,\,$\hat{A}$ is $p\times q$\, submatrix  of\,
$A$. Following,\, assertion \,(III) can be \,deduced from the
condition $\norm{diag(A,B)}_{2}=\max{\set{\norm A_{2},\norm B_{2}}}$
and for (IV), (V) the triangle inequality and the
unitarily invariant property of $\norm{\cdot}_{2}$ are applied,
respe\-cti\-ve\-ly.
\end{proof}

The computation of $w(A)$ from inside is presented by the next
proposition.

\begin{prop}
Let $l, k$ be positive integers less than $m, n$, respectively. Then
\begin{equation}\label{pr1}
w(A)=\mathcal{D}\left(0,\max_{
\substack{\xi_{1},\ldots,\xi_{l}\in\Complex^{m}\\
              \eta_{1},\ldots,\eta_{k}\in\Complex^{n}}}
{\norm{\begin{bmatrix} \xi_{i}^{*}A\eta_{j}  \\ \end{bmatrix}_{i,j=1}^{l,k}}}_{2}\right)
\end{equation}
where $\set{\xi_{1},\dots,\xi_{l}}$ and
$\set{\eta_{1},\ldots,\eta_{k}}$ are orthonormal vectors of
$\Complex^{m}$ and $\Complex^{n}$, respectively.
\end{prop}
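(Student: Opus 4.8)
The plan is to show that the right-hand side of \eqref{pr1} equals the disc $\mathcal{D}(0,\norm A_2)$, which by Proposition 1 is exactly $w(A)$. Since the center $0$ is already known to lie in $w(A)$, the entire claim reduces to proving that the indicated maximum of the spectral norm equals $\norm A_2 = \sigma_{\max}(A)$. So the real content is the scalar identity
\[
\max_{\substack{\xi_1,\dots,\xi_l\\ \eta_1,\dots,\eta_k}}
\norm{\begin{bmatrix} \xi_i^{*}A\eta_j \end{bmatrix}_{i,j=1}^{l,k}}_2 = \sigma_{\max}(A),
\]
taken over orthonormal systems $\set{\xi_1,\dots,\xi_l}\subset\Complex^m$ and $\set{\eta_1,\dots,\eta_k}\subset\Complex^n$.

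First I would set up the two matrices of the orthonormal data: let $\Xi=[\xi_1\ \cdots\ \xi_l]\in\M_{m,l}$ and $H=[\eta_1\ \cdots\ \eta_k]\in\M_{n,k}$, so that $\Xi^{*}\Xi=I_l$ and $H^{*}H=I_k$. The key observation is that the displayed block of inner products is exactly the compressed matrix $\Xi^{*}AH\in\M_{l,k}$, since its $(i,j)$ entry is $\xi_i^{*}A\eta_j$. Thus the problem becomes: maximize $\norm{\Xi^{*}AH}_2$ over all matrices $\Xi,H$ with orthonormal columns. For the upper bound I would invoke submultiplicativity together with the fact that a matrix with orthonormal columns is an isometry on $\Complex^2$-norm, giving $\norm{\Xi^{*}AH}_2 \le \norm{\Xi^{*}}_2\,\norm A_2\,\norm H_2 = \norm A_2$, because $\norm{\Xi^{*}}_2=\norm\Xi_2=1$ and $\norm H_2=1$.

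For the matching lower bound I would exhibit a specific admissible choice attaining $\sigma_{\max}(A)$. Take $u,v$ to be the left and right singular vectors for the largest singular value, i.e. $Av=\sigma_{\max}(A)\,u$ with $\norm u_2=\norm v_2=1$. Set $\eta_1=v$ and $\xi_1=u$, and complete each to an orthonormal system by any orthonormal vectors $\eta_2,\dots,\eta_k$ and $\xi_2,\dots,\xi_l$. Then the $(1,1)$ entry of $\Xi^{*}AH$ equals $\xi_1^{*}A\eta_1 = u^{*}Av = \sigma_{\max}(A)$, and since the spectral norm of a matrix is at least the modulus of any of its entries (equivalently, $\norm{\Xi^{*}AH}_2 \ge \abs{\xi_1^{*}A\eta_1}$ because $\xi_1^{*}A\eta_1 = e_1^{*}(\Xi^{*}AH)e_1$ and $\norm{\cdot}_2$ dominates that scalar), the norm is at least $\sigma_{\max}(A)$. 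Combining the two bounds gives equality.

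The main obstacle, such as it is, is to state the upper bound cleanly when $l<m$ and $k<n$: one must be careful that $\Xi$ and $H$ are not square, so $\Xi^{*}$ is a contraction rather than a unitary, and the inequality $\norm{\Xi^{*}}_2\le 1$ should be justified from $\Xi^{*}\Xi=I_l$ (giving singular values all equal to $1$), not from unitarity. A secondary point to verify is that the maximum is genuinely attained and not merely a supremum; this follows because the orthonormal systems range over a compact set (a product of Stiefel manifolds) and $\norm{\Xi^{*}AH}_2$ depends continuously on $(\Xi,H)$, so the explicit singular-vector choice above both shows attainment and identifies the maximizer. Everything else is routine, and the identification of the block $[\xi_i^{*}A\eta_j]$ with the compression $\Xi^{*}AH$ is the step that makes the whole argument transparent.
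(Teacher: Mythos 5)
Your proof is correct, and it reaches the result by a somewhat different route than the paper's, although both arguments hinge on the same two ingredients: recognizing the block $[\xi_i^{*}A\eta_j]$ as the compression $\Xi^{*}AH$ and invoking Proposition 1. You isolate and prove the radius identity $\max\norm{\Xi^{*}AH}_2=\norm A_2$ head-on: submultiplicativity with $\norm{\Xi^{*}}_2=\norm H_2=1$ for the upper bound, and the top singular pair $\xi_1=u$, $\eta_1=v$ (extendable to orthonormal systems precisely because $l<m$, $k<n$) together with the entrywise bound $\norm{B}_2\geq\abs{e_1^{*}Be_1}$ for the lower bound. The paper never exhibits a maximizer: it starts from an arbitrary point $y^{*}Ax\in w(A)$, places $x$ and $y$ inside $k$- and $l$-dimensional subspaces $\mathcal{F}$, $\mathcal{G}$ whose orthonormal bases are the $\eta_j$, $\xi_i$, and writes $y^{*}Ax=v^{*}(\Xi^{*}AH)u$ with $u$, $v$ unit vectors; this single identity yields both the inclusion $w(A)\subseteq\mathcal{D}(0,\max\cdots)$ and, via the variational characterization $\norm{\Xi^{*}AH}_2=\max_{\norm u_2=\norm v_2=1}\abs{v^{*}(\Xi^{*}AH)u}\leq\norm A_2$, the reverse inclusion $\mathcal{D}(0,\max\cdots)\subseteq\mathcal{D}(0,\norm A_2)=w(A)$, closing a sandwich of inclusions with no explicit attainment argument. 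What your route buys: attainment of the maximum is actually proved (the paper's sandwich only forces the supremum to equal $\norm A_2$, leaving attainment implicit, which your singular-vector choice settles), and the upper bound is purely norm-theoretic. What the paper's route buys: it shows the structurally stronger fact that every point of $w(A)$ -- not merely the extremal radius -- is realized inside the numerical range of some compression $\Xi^{*}AH$, a union-over-compressions picture that is echoed later in Proposition 8(II).
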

\begin{proof}
Any vectors $x\in\Complex^{n}$ and
$y\in\Complex^{m}$ belong to subspaces
$\mathcal{F}\subseteq\Complex^{n}$ and
$\mathcal{G}\subseteq\Complex^{m}$. If
$\set{\eta_{1},\ldots,\eta_{k}}$ and $\set{\xi_{1},\ldots,\xi_{l}}$
are orthonormal bases of $\mathcal{F}$ and $\mathcal{G}$,
respectively, then
\[
x=\begin{bmatrix}
      \eta_{1} & \ldots & \eta_{k} \\
    \end{bmatrix}u,\,\,\,y=\begin{bmatrix}
                       \xi_{1} & \ldots & \xi_{l} \\
                     \end{bmatrix}v
\]
where $u\in\Complex^{k}$ and $v\in\Complex^{l}$. Since $\norm
x=\norm y=1$, $u$ and $v$ are also unit vectors and we have
\[
y^{*}Ax=v^{*}\begin{bmatrix}
                 \xi_{1}^{*} \\
                 \vdots \\
                 \xi_{l}^{*} \\
               \end{bmatrix}A\begin{bmatrix}
                         \eta_{1} & \ldots & \eta_{k} \\
                       \end{bmatrix}u=v^{*}\begin{bmatrix}
                                             \xi_{i}^{*}A\eta_{j} \\
                                           \end{bmatrix}_{i,j=1}^{l,k}u.
\]
Thus, we verify $\norm{\begin{bmatrix} \xi_{i}^{*}A\eta_{j}  \\
                    \end{bmatrix}_{i,j=1}^{l,k}}_{2}\leq \norm A_{2}$
and then the equation \eqref{pr1}.
\end{proof}

Note that in \eqref{pr1} for $k=n$ and $\begin{bmatrix}
                                            \eta_{1} & \ldots & \eta_{n} \\
                                          \end{bmatrix}=I_{n}$ we have
$$w(A)=\mathcal{D}(0,\max_{\substack{\Xi\in\M_{m,l} \\
              \Xi^{*}\Xi=I_{l}}}\norm{\Xi^{*}A}_{2}),$$
where $\Xi=\begin{bmatrix}
             \xi_{1} & \ldots & \xi_{l} \\
           \end{bmatrix}$.

For a pair of matrices $A,B\in\M_{m,n}$ the numerical
$w_{\norm\cdot}(A,B)$ as it has been presented in \eqref{w3} and
\eqref{w4}, imposes the question ''how $w_{\norm\cdot}(A,B)$ in
\eqref{w4} is independent of B''. An answer is given in the next
proposition.

\begin{prop}
Let $A,B\in\M_{m,n}$ such that $\norm B_{F}\geq 1$. Then
   $$\!\!\!\!\!\!\!\!\!\!\!\!\!\!\!\!\!\!\!\!\!\!\!\!\!\!\!\!\!\!\!\!\!\!\!\!\!\!\!\!\!\!
   \!\!\!\!\!\!\!\!\!\!\!\!\!\!\!\!\!\!\!\!\!\!\!\!\!\!\!\!\!\!\!\!\!\!\!\!\!\!\!\!\!\!\!
   \!\!\!\!\!\!\!\!\!\!\!\!\!\!
    \textbf{\textsc{I.}}
   \bigcup_{\norm B_{F}\geq 1}{w_{\norm{\cdot}_{F}}(A,B)}=\mathcal{D}(0,\norm A_{F}). $$
   \begin{description}
   \item[\,II.]
   \quad\!\!\!If\, $rankB=k$ and \,$\norm\sigma_{F}\geq\sqrt{k}$, where\, the\, vector $\sigma=(\sigma_{1}, \ldots, \sigma_{k})$
   corresponds to the singular values of $B$, then the centers of the discs
   in \eqref{w4}, $\frac{\seq{A,B}}{\norm B_{F}^{2}}\in \mathcal{D}(0,\norm A_{2}).$
   \end{description}
\end{prop}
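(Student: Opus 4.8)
The plan is to prove the two parts of Proposition 6 separately, with each reducing to the explicit disc formula \eqref{w4} together with elementary facts about the Frobenius norm and singular values.

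For part \textbf{I}, the strategy is to establish the set equality by showing both inclusions, treating $\mathcal{D}(0,\norm A_F)$ as the target. First I would recall from \eqref{w4} that for each fixed $B$ with $\norm B_F \geq 1$, the set $w_{\norm{\cdot}_F}(A,B)$ is the disc centered at $c_B = \seq{A,B}/\norm B_F^2$ with radius $r_B = \norm{A - c_B B}_F \sqrt{1 - \norm B_F^{-2}}$. To get the inclusion $\bigcup_B w_{\norm{\cdot}_F}(A,B) \subseteq \mathcal{D}(0,\norm A_F)$, I would bound $\abs{c_B} + r_B$ from above by $\norm A_F$ uniformly in $B$; since $\abs{c_B} = \abs{\seq{A,B}}/\norm B_F^2$ and $\norm{A - c_B B}_F^2 = \norm A_F^2 - \abs{\seq{A,B}}^2/\norm B_F^2$ by orthogonal projection of $A$ onto the span of $B$ in the Frobenius inner product, the quantity $\abs{c_B}^2 + r_B^2$ and the cross term can be controlled. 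The reverse inclusion $\mathcal{D}(0,\norm A_F) \subseteq \bigcup_B w_{\norm{\cdot}_F}(A,B)$ is the part that carries the real content: I would exhibit, for a prescribed target point, a matrix $B$ (most naturally taken as a scalar multiple of $A$ itself, or of a suitably rotated $A$) for which the corresponding disc reaches that point. Taking $B = tA/\norm A_F$ for $t \geq 1$ makes $c_B$ collinear with $A$ and lets the radius sweep outward as $t \to 1^+$; the extreme case $\norm B_F = 1$ degenerates the radius to $0$ but pushes the center to its maximal modulus $\norm A_F$, so a limiting argument recovers the full closed disc.

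For part \textbf{II}, the goal is the containment $c_B = \seq{A,B}/\norm B_F^2 \in \mathcal{D}(0,\norm A_2)$ under the hypotheses $\operatorname{rank} B = k$ and $\norm\sigma_F \geq \sqrt{k}$, where $\norm\sigma_F^2 = \sum_{i=1}^k \sigma_i^2 = \norm B_F^2$. The plan is a direct Cauchy–Schwarz estimate in the Frobenius inner product: $\abs{\seq{A,B}} \leq \norm A_2 \norm B_* = \norm A_2 \sum_{i=1}^k \sigma_i$, using the duality between the spectral norm $\norm\cdot_2$ and the nuclear (trace) norm $\norm\cdot_* = \sum \sigma_i$. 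Then I would show $\abs{c_B} = \abs{\seq{A,B}}/\norm B_F^2 \leq \norm A_2 \bigl(\sum_i \sigma_i\bigr)/\bigl(\sum_i \sigma_i^2\bigr) \leq \norm A_2$, where the last inequality is exactly where the hypothesis enters: by Cauchy–Schwarz $\sum_i \sigma_i \leq \sqrt{k}\,\sqrt{\sum_i \sigma_i^2} = \sqrt{k}\,\norm\sigma_F$, so $\bigl(\sum_i \sigma_i\bigr)/\bigl(\sum_i \sigma_i^2\bigr) \leq \sqrt{k}/\norm\sigma_F \leq 1$ precisely when $\norm\sigma_F \geq \sqrt{k}$.

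I expect the main obstacle to lie in part \textbf{I}, specifically in the surjectivity inclusion onto the whole disc and in handling the boundary: the radius formula $r_B$ collapses as $\norm B_F \to 1$, so reaching points near the boundary circle $\abs z = \norm A_F$ requires either a careful limiting/continuity argument or a clever non-collinear choice of $B$ that trades center modulus against radius. The upper bound in part \textbf{I} is a cleaner but still nontrivial optimization over all admissible $B$, and verifying that $\sup_B (\abs{c_B} + r_B) = \norm A_F$ (rather than something strictly smaller) is the step I would check most carefully, since it pins down that the union is exactly the disc and not a proper subset. Part \textbf{II}, by contrast, should follow mechanically once the spectral/nuclear norm duality is invoked.
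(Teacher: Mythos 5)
Your proposal is correct and takes essentially the same approach as the paper's proof: for part I the paper derives the bound $\abs{c_B}+r_B\leq\norm A_F$ from the same projection identity $\norm{A-c_B B}_F^{2}=\norm A_F^{2}-\abs{\seq{A,B}}^{2}/\norm B_F^{2}$, and attains the disc with exactly your choice $B_0=Ae^{-i\theta}/\norm A_F$, for which the disc in \eqref{w4} degenerates to the single point $\norm A_F e^{i\theta}$ (so no limiting argument is actually required). For part II the paper's chain $\abs{\seq{A,B}}/\norm B_F^{2}\leq\sigma_{\max}(A)\sum\sigma(B)/\sum\sigma^{2}(B)\leq\norm A_2$ is precisely your spectral/nuclear duality estimate followed by the same Cauchy--Schwarz use of the hypothesis $\norm\sigma_F\geq\sqrt{k}$.
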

\begin{proof}
\textbf{I.} Let $z\in \bigcup_{\norm
B_{F}\geq1}{w_{\norm{\cdot}_{F}}(A,B)}$, then there exists a matrix
$B_{0}\in\M_{m,n}$ with $\norm{B_{0}}_{F}\geq1$, such that
$|z-\frac{\seq{A,B_{0}}}{\norm{B_{0}}_{F}^{2}}|\leq \|A-
\frac{\seq{A,B_{0}}}{\norm{B_{0}}_{F}^{2}}B_{0}\|_{F}\sqrt{1-\norm{B_{0}}_{F}^{-2}}$.
Hence,
\begin{equation}\label{pr2}
\abs z\leq\frac{\abs{\seq{A,B_{0}}}}{\norm{B_{0}}_{F}^{2}}+
\|A-\frac{\seq{A,B_{0}}}{\norm{B_{0}}_{F}^{2}}B_{0}\|_{F}\sqrt{1-\norm{B_{0}}_{F}^{-2}}
\end{equation}
and it suffices to show that the right part of \eqref{pr2} is less
than $\norm A_{F}$. In fact, the relationship $(\norm
A_{F}-\abs{\seq{A,B_{0}}})^{2}\geq 0$ is equivalent to
\[
\left( \norm
A_{F}^{2}-\frac{\abs{\seq{A,B_{0}}}^{2}}{\norm{B_{0}}_{F}^{2}}\right)
(1-\norm{B_{0}}_{F}^{-2})\leq\left( \norm
A_{F}-\frac{\abs{\seq{A,B_{0}}}}{\norm{B_{0}}_{F}^{2}}\right)^{2}.
\]
Since, \,\,$\norm
A_{F}^{2}-\frac{\abs{\seq{A,B_{0}}}^{2}}{\norm{B_{0}}_{F}^{2}}=
\|A-\frac{\seq{A,B_{0}}}{\norm{B_{0}}_{F}^{2}}B_{0}\|_{F}^{2}$,\,\,
we have
\[
\frac{\abs{\seq{A,B_{0}}}}{\norm{B_{0}}_{F}^{2}}+\|A-\frac{\seq{A,B_{0}}}{\norm{B_{0}}_{F}^{2}}B_{0}\|_{F}\sqrt{1-\norm{B_{0}}_{F}^{-2}}
\leq\norm A_{F}.
\]
Moreover, if $B_{0}=Ae^{-i\theta}/\norm A_{F}$, $\theta\in[0,2\pi)$,
then $\norm{B_{0}}_{F}=1$ and
\[
\frac{\seq{A,B_{0}}}{\norm{B_{0}}_{F}^{2}}=\norm A_{F}e^{i\theta},
\quad\quad\|A-\frac{\seq{A,B_{0}}}{\norm{B_{0}}_{F}^{2}}B_{0}
\|_{F}=0.
\]
Thus, by \eqref{w4} we have
\[
\abs{z-\norm A_{F}e^{i\theta}}\leq 0\,\, \Rightarrow\,\, z=\norm
A_{F}e^{i\theta}\,\, \Rightarrow\,\,\abs z=\norm A_{F}
\]
thereby, the boundary of $\mathcal{D}(0,\norm A_{F})$ is
attained.\\\\
\textbf{II.} Denoting by $\lambda(\cdot)$ and $\sigma(\cdot)$ the
eigenvalues and singular values of matrices and making use of known
inequalities \cite[p.176,177]{H.J.T} it follows that
\[
\frac{\abs{\seq{A,B}}}{\norm B_{F}^{2}}=
\frac{\abs{tr(B^{*}A)}}{\norm B_{F}^{2}}=
\frac{\abs{\sum{\lambda(B^{*}A)}}}{\norm B_{F}^{2}}
\leq\frac{\sum{\abs{\lambda(B^{*}A)}}}{\norm B_{F}^{2}}
\leq\frac{\sum{\sigma(B^{*}A)}}{\norm B_{F}^{2}}
\]
\begin{equation}\label{pr3}
\!\!\!\!\!\!\!\!\!\!\!\!\!\!\!\!\!\!\!\!\!\!\!\!\!\!\leq\frac{\sum{\sigma(B^{*})\sigma(A)}}{\norm B_{F}^{2}}
\leq\sigma_{\max}(A)\frac{\sum{\sigma(B)}}{\sum{\sigma^{2}(B)}}.
\end{equation}
Since $\norm\sigma_{F}\geq\sqrt{k}$, then
$\sum{\sigma^{2}(B)}=\norm\sigma_{F}^{2}\geq\sqrt{k}\norm\sigma_{F}\geq\seq{\textbf{1},\sigma}=\sum{\sigma(B)}$
and consequently by \eqref{pr3},
$$\frac{\abs{\seq{A,B}}}{\norm B_{F}^{2}}\leq\sigma_{\max}(A)=\norm A_{2}.$$
\end{proof}

The conclusions of proposition 5 strengthen the definition $w(A)$ in \eqref{w2} since the independence
of $w_{\norm\cdot_{F}}(A,B)$ by the matrix $B$ leads to a circular disc.

\begin{prop}
Let $A\in\M_{m,n}$, then
\[
w(A)=\set{\seq{A,B} : B\in\M_{m,n},\,\, rank B=1,\,\, \norm
B_{F}=1}.
\]
\end{prop}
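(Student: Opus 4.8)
The plan is to recognize this statement as a direct re-encoding of the defining formula \eqref{w2} once rank-one matrices are parametrized correctly, so no appeal to the disc description of Proposition~1 is actually needed. First I would fix the convention: the bracket is the Frobenius inner product $\seq{A,B}=\mathrm{tr}(B^{*}A)$, exactly as already used in the proof of Proposition~5. The whole argument then reduces to two elementary facts, a parametrization lemma and a trace identity.

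The central observation is the parametrization of the admissible matrices $B$. A matrix $B\in\M_{m,n}$ has rank one precisely when all of its columns are scalar multiples of one fixed vector, so $B=yc^{*}$ for some $y\in\Complex^{m}$ and $c\in\Complex^{n}$; equivalently this is the single-term singular value decomposition of a rank-one matrix. Since $\norm{B}_{F}=\norm{y}_{2}\,\norm{c}_{2}$, the constraint $\norm{B}_{F}=1$ lets me absorb the scalar $\norm{c}_{2}$ into $y$ and write $B=yx^{*}$ with $\norm{x}_{2}=\norm{y}_{2}=1$, where $x=c/\norm{c}_{2}$. Conversely, for any unit vectors $x,y$ the product $yx^{*}$ is rank one with $\norm{yx^{*}}_{F}=1$. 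Thus the set of rank-one, unit-Frobenius-norm matrices is exactly $\set{yx^{*}:\norm{x}_{2}=\norm{y}_{2}=1}$ — which is precisely the matrix $S=yx^{*}$ figuring in the representation \eqref{w8}.

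Next comes the trace identity. For $B=yx^{*}$ I would compute $\seq{A,yx^{*}}=\mathrm{tr}\big((yx^{*})^{*}A\big)=\mathrm{tr}(xy^{*}A)=y^{*}Ax$, using cyclicity of the trace and the fact that $y^{*}Ax$ is a scalar. Feeding the parametrization into this identity, the right-hand set of the proposition becomes $\set{y^{*}Ax:\norm{x}_{2}=\norm{y}_{2}=1}$, which is $w(A)$ by definition \eqref{w2}. The set equality follows immediately, since I only need surjectivity of the map $(x,y)\mapsto yx^{*}$ onto the admissible $B$'s, not injectivity (the unimodular phase ambiguity $(x,y)\mapsto(e^{i\alpha}x,e^{i\alpha}y)$ is irrelevant to the image).

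I do not expect a genuine obstacle here; the proposition is essentially a dictionary entry translating the bilinear definition into the inner-product language that will be needed for the rank-$k$ generalization \eqref{w10}. The only point requiring mild care is the normalization step in the parametrization lemma, ensuring that the factor $\norm{c}_{2}$ is moved into $y$ so that \emph{both} $x$ and $y$ end up with unit norm while $yx^{*}=B$ is preserved; everything else is a one-line verification.
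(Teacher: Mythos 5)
Your proposal is correct and follows essentially the same route as the paper: both parametrize the rank-one, unit-Frobenius-norm matrices as outer products $yx^{*}$ of unit vectors and then use the trace identity $\seq{A,yx^{*}}=\mathrm{tr}(xy^{*}A)=y^{*}Ax$ to match the set with definition \eqref{w2}. The only cosmetic difference is that you handle the normalization up front in the parametrization lemma, whereas the paper rescales $x$ and $y$ at the end using $\norm x_{2}\norm y_{2}=1$; the mathematical content is identical.
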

\begin{proof}
Let $z\in w(A)$, then there exist unit vectors
$x\in\Complex^{n}$, $y\in\Complex^{m}$ such that
\[
z=y^{*}Ax=tr(y^{*}Ax)=tr(Axy^{*})=\seq{A,yx^{*}}
\]
Denoting by $B=yx^{*}$, obviously $rankB=1$ and
\[
\norm B_{F}^{2}=tr(B^{*}B)=tr(xy^{*}yx^{*})=tr(xx^{*})=tr(x^{*}x)=1.
\]
Conversely, if $rankB=1$ then $B=yx^{*}$ and evenly
$\seq{A,yx^{*}}=tr(xy^{*}A)=y^{*}Ax$. Since, $1=\norm
B_{F}^{2}=tr(xy^{*}yx^{*})=\norm x_{2}^{2}\norm y_{2}^{2}$, to the case where $x, y$
are not unit vectors, let $\norm y\geq 1$, then
$\norm x\leq 1$ and we verify that the point
$y^{*}Ax=\frac{y^{*}}{\norm y}A\frac{x}{\norm x}$ belongs to $w(A)$.
\end{proof}
\textbf{Example.}
If  $A=\left[\begin{array}{ccc}
         6+i & 0 & 1/2 \\
         -4 & -3-6i & 0 \\
         \end{array}\right]$, Propositions 5 and 6 are illustrated in the next
figure, where the drawing discs $w_{\norm\cdot}(A,B)$ in \eqref{w4},
for six different matrices $B$ with $\norm B_{F}\geq 1$, approximate
the disc $\mathcal{D}(0,\norm A_{F})$. The dashed circle is
$w(A)$ in \eqref{w2}.

\begin{center}                                                 
\includegraphics[width=0.5\textwidth]{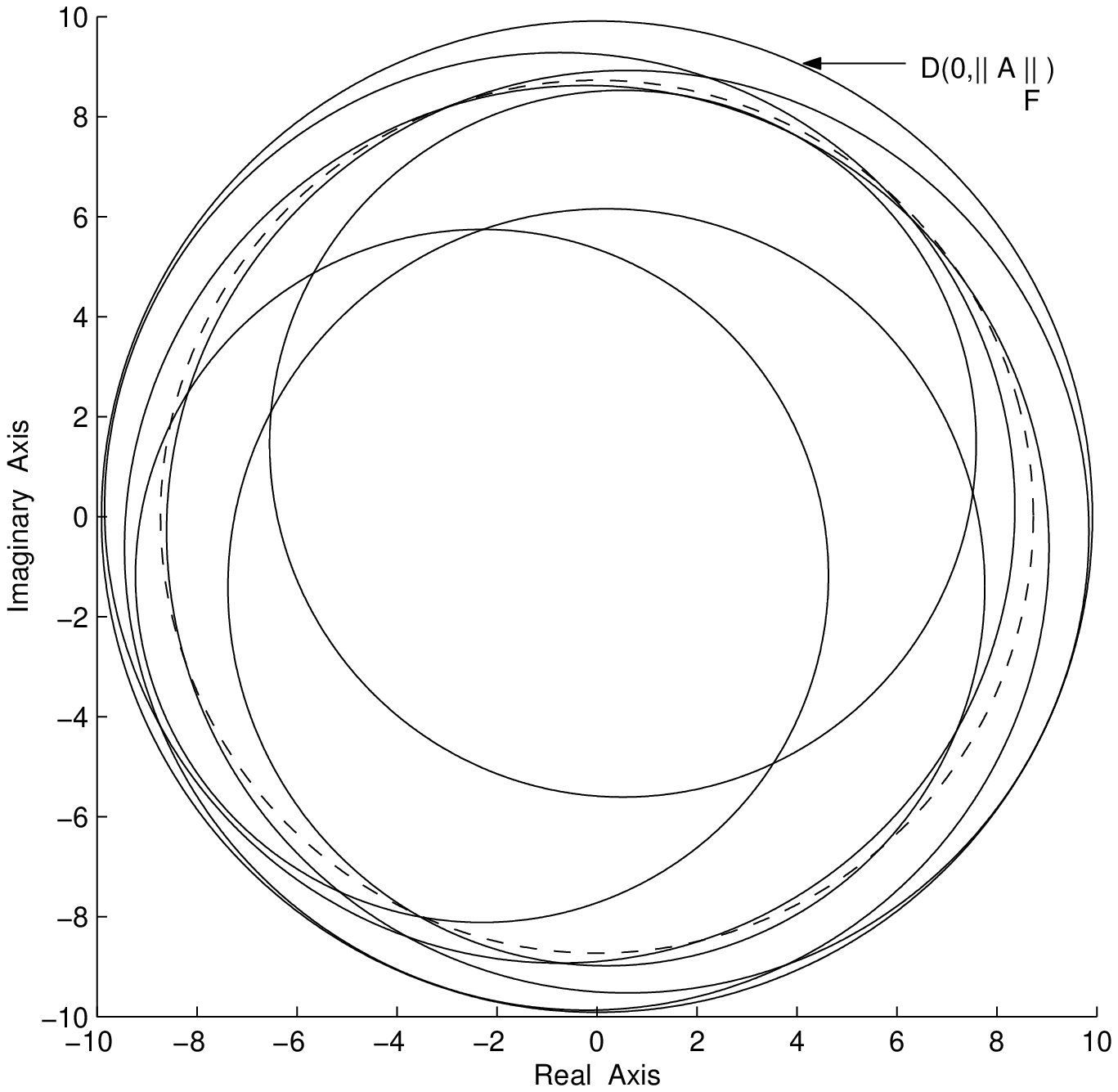}                 
\end{center}                                                   


\section{Properties of $w_{l}(A)$ and $w_{h}(A)$}

In the introduction we have been referred to the numerical ranges
$w_{l}(A)$ and $w_{h}(A)$ for rectangular matrices with respect to unitary $m\times n$ matrix $H$.
Let $A=\left[
                                                  \begin{array}{c}
                                                    A_{1} \\
                                                    A_{2} \\
                                                  \end{array}
                                                \right]$, (or $A=\left[
                                                               \begin{array}{cc}
                                                                 A_{1} & A_{2} \\
                                                               \end{array}
                                                             \right]$),
with $A_{1}$ to be square. Then for $H=\left[
                                         \begin{array}{c}
                                           I \\
                                           0 \\
                                         \end{array}
                                       \right]$, by
\eqref{w5}-\eqref{w6} we have
\[
w_{l}(A)=F(A_{1})\,\,\,\, \textrm{and}\,\,\,\,
w_{h}(A)=F(\left[
                                             \begin{array}{cc}
                                               A & 0_{m\times (m-n)} \\
                                             \end{array}
                                           \right]), \,\textrm{when}\,\,\,\,\, m>n
\]
and by \eqref{w7} we have
\[
w_{l}(A)=F(A_{1})\,\,\,\,\textrm{and}\,\,\,\, w_{h}(A)=F(\left[
                                                   \begin{array}{c}
                                                     A \\
                                                     0_{(n-m)\times n} \\
                                                   \end{array}
                                                 \right]),\, \textrm{when}\,\,\,\,\, m<n.
\]

\begin{prop}
Let the vector $\textbf{a}=\left[
                             \begin{array}{cccc}
                               a_{1} & a_{2} & \ldots & a_{m} \\
                             \end{array}
                           \right]^{T}\in \Complex^{m}$,
then $w_{h}(\textbf{a})$ with respect to $H=\begin{bmatrix}
                                              I_{1} \\
                                              0 \\
                                            \end{bmatrix}$
is the elliptical disc with focal points 0 and
$a_{1}$, the major axis has length $\norm{\textbf{a}}_{2}$ and the minor
axis has length
$\norm{\textbf{b}}_{2}$, where
$\textbf{b}=\left[
         \begin{array}{ccc}
           a_{2} & \ldots & a_{m} \\
         \end{array}
       \right]^{T}$.
\end{prop}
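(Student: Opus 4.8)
The plan is to unfold the definition \eqref{w6} and reduce the whole problem to the elliptical range theorem for a single $2\times 2$ matrix. With $H=\begin{bmatrix} I_{1}\\ 0\end{bmatrix}=e_{1}$, the first standard basis vector of $\Complex^{m}$, we have $w_{h}(\textbf{a})=F(\textbf{a}H^{*})=F(\textbf{a}e_{1}^{*})$, so I set $M:=\textbf{a}e_{1}^{*}$, the $m\times m$ matrix whose first column is $\textbf{a}$ and whose remaining columns vanish. Since $M$ has rank one, I expect its numerical range to be governed entirely by the action of $M$ on the two-dimensional subspace $V=\set{e_{1},\textbf{a}}$ spanned by $e_{1}$ and $\textbf{a}$.

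First I would produce an orthonormal basis of $V$ adapted to $M$. Writing $\tilde{\textbf{b}}=(0,a_{2},\ldots,a_{m})^{T}$, so that $\textbf{a}=a_{1}e_{1}+\tilde{\textbf{b}}$ with $e_{1}\perp\tilde{\textbf{b}}$ and $\norm{\tilde{\textbf{b}}}_{2}=\norm{\textbf{b}}_{2}$, I take $f_{1}=e_{1}$ and $f_{2}=\tilde{\textbf{b}}/\norm{\textbf{b}}_{2}$ (assuming for the moment $\textbf{b}\neq 0$). A short computation gives $Me_{1}=\textbf{a}=a_{1}f_{1}+\norm{\textbf{b}}_{2}f_{2}$ and $Mf_{2}=(e_{1}^{*}\tilde{\textbf{b}}/\norm{\textbf{b}}_{2})\,\textbf{a}=0$, while $M$ annihilates every vector of $V^{\perp}\subseteq e_{1}^{\perp}$. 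Hence, extending $\set{f_{1},f_{2}}$ to an orthonormal basis of $\Complex^{m}$, the matrix $M$ is unitarily equivalent to the block-diagonal matrix $C\oplus 0_{m-2}$, where $C=\begin{bmatrix} a_{1} & 0\\ \norm{\textbf{b}}_{2} & 0\end{bmatrix}$.

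By the unitary invariance of $F$ together with the standard identity $F(C\oplus 0)=\mathrm{conv}(F(C)\cup\set{0})$ for a direct sum, I would deduce $w_{h}(\textbf{a})=\mathrm{conv}(F(C)\cup\set{0})$. The key observation is that $0$ is already an eigenvalue of $C$, so $0\in F(C)$ and the convex hull collapses to $w_{h}(\textbf{a})=F(C)$; that is, the spurious dimensions add nothing. It then remains to identify $F(C)$: the eigenvalues of $C$ are $a_{1}$ and $0$, and $\mathrm{tr}(C^{*}C)=\abs{a_{1}}^{2}+\norm{\textbf{b}}_{2}^{2}=\norm{\textbf{a}}_{2}^{2}$. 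Applying the elliptical range theorem, $F(C)$ is the elliptical disc with foci $a_{1}$ and $0$ and minor axis of length $\sqrt{\mathrm{tr}(C^{*}C)-\abs{a_{1}}^{2}}=\norm{\textbf{b}}_{2}$; the focal relation then forces the major axis to have length $\sqrt{\abs{a_{1}}^{2}+\norm{\textbf{b}}_{2}^{2}}=\norm{\textbf{a}}_{2}$, exactly as asserted.

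The main obstacle I anticipate is this reduction step rather than the closing computation: one must verify carefully that $M$ is block diagonal with respect to $V\oplus V^{\perp}$ (using that $M$ kills $e_{1}^{\perp}\supseteq V^{\perp}$) and, crucially, that the extra zero block is harmless because $0$ is a focus, hence an interior point, of $F(C)$. I would also treat the degenerate cases separately: if $\textbf{b}=0$ then $\textbf{a}=a_{1}e_{1}$ and $w_{h}(\textbf{a})$ is the segment joining $0$ and $a_{1}$, matching a degenerate ellipse of zero minor axis; and if $a_{1}=0$ the two foci coincide and $F(C)$ is the disc $\mathcal{D}(0,\norm{\textbf{a}}_{2}/2)$. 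Both limiting shapes agree with the stated focal points and axis lengths.
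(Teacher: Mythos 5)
Your proof is correct and takes essentially the same approach as the paper: both arguments unitarily reduce $\textbf{a}e_{1}^{*}=\left[\begin{array}{cc} \textbf{a} & 0_{m\times(m-1)}\end{array}\right]$ to the direct sum of $C=\begin{bmatrix} a_{1} & 0 \\ \norm{\textbf{b}}_{2} & 0 \end{bmatrix}$ (up to a unimodular phase in the $(2,1)$ entry) with $0_{m-2}$ --- the paper exhibiting the unitary as an explicit Householder reflection, you as an adapted orthonormal basis --- and then invoke the elliptical range theorem for the $2\times2$ block. Your write-up is in fact slightly more complete, since you justify discarding the zero summand via $0\in\sigma(C)\subseteq F(C)$ and treat the degenerate cases $\textbf{b}=0$ and $a_{1}=0$ that the paper's Householder construction tacitly sets aside (though your side remark that a focus is an ``interior point'' is unnecessary and false for the degenerate ellipse; the eigenvalue argument you already gave is the right one).
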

\begin{proof}
By the definition $w_{h}(\textbf{\emph{a}})=F(\left[
                                     \begin{array}{cc}
                                       \textbf{\emph{a}} & 0_{m\times(m-1)} \\
                                     \end{array}
                                   \right])$. If $\textbf{\emph{b}}$ is not
collinear of $\eps_{1}=\left[
                         \begin{array}{cccc}
                           1 & 0 & \ldots & 0 \\
                         \end{array}
                       \right]^{T}\in\Complex^{m-1}$, we consider
the\, Householder \,matrix $H=I_{m-1}-2\frac{uu^{*}}{\norm u^{2}}$,
with $u=\textbf{\emph{b}}-\frac{\norm{\textbf{\emph{b}}}_{2}a_{2}}{\abs{a_{2}}}\eps_{1}$.
Then
\[
\left[
  \begin{array}{cc}
    1 & 0 \\
    0 & H \\
  \end{array}
\right]\left[
         \begin{array}{cc}
           \textbf{\emph{a}} & 0 \\
         \end{array}
       \right]\left[
                \begin{array}{cc}
                  1 & 0 \\
                  0 & H^{*} \\
                \end{array}
              \right]=diag\left(\left[
                                  \begin{array}{cc}
                                    a_{1} & 0 \\
                                    \frac{\norm{\textbf{\emph{b}}}_{2}a_{2}}{\abs{a_{2}}} & 0 \\
                                  \end{array}
                                \right], 0_{m-2}\right).
\]

Hence, \,$F(\left[
                                     \begin{array}{cc}
                                       \textbf{\emph{a}} & 0_{m\times(m-1)} \\
                                     \end{array}
                                   \right])=F(\left[
                                  \begin{array}{cc}
                                    a_{1} & 0 \\
                                    \frac{\norm{\textbf{\emph{b}}}_{2}a_{2}}{\abs{a_{2}}} & 0 \\
                                  \end{array}
                                \right])$ and the\, numerical\, range\, on
the right is the elliptical disc with the aforementioned
 characte\-ristic\, features.
\end{proof}

By Proposition 7, clearly, $w_{h}(\textbf{\emph{a}})=\set{z
: \abs z\leq \norm{\textbf{\emph{b}}}_{2}}$, when $a_{1}=0$. Moreover, if
$\textbf{\emph{a}}\in\M_{1,m}$, it is explicitly viewed that
$w_{h}(\textbf{\emph{a}})$ is the same elliptical disc.
\begin{prop}
Let $m>n$ and $A\in\M_{m,n}$. If $A=\left[
                                       \begin{array}{c}
                                         A_{1} \\
                                         A_{2} \\
                                       \end{array}
                                     \right]$, where $A_{1}$ is the
principal $n\times n$ submatrix of $A$, then
\begin{description}
  \item[ \,\,\,\textbf{I.}]
  $w_{l}(A)\subseteq w_{h}(A)$ for every unitary $H\in\M_{m,n}$.
  \item[\,\,\textbf{II.}]
  $w(A)=\bigcup_{H}{w_{l}(A)}=\bigcup_{H}{w_{h}(A)}.$
  \item[\textbf{III.}]
  $\RE w_{h}(A)=F(\left[
                  \begin{array}{cc}
                    \mathcal{H}(A_{1}) & A_{2}^{*}/2 \\
                    A_{2}/2 & 0_{m-n} \\
                  \end{array}
                \right]$, $\IM w_{h}(A)=F(\left[
                  \begin{array}{cc}
                    \mathcal{S}(A_{1}) & -A_{2}^{*}/2 \\
                    A_{2}/2 & 0_{m-n} \\
                  \end{array}
                \right]$
  with respect to unitary $H=\begin{bmatrix}
                               I_{n} \\
                               0 \\
                             \end{bmatrix}$,
  where $\mathcal{H}(\cdot)$ and $\mathcal{S}(\cdot)$ denote the hermitian and
  skew-hermitian part of matrix, respectively.
  \item[\textbf{IV.}]
  $\sigma(A_{1})\subseteq w_{h}(A)\subseteq w(A)$ with $H=\begin{bmatrix}
                               I_{n} \\
                               0 \\
                             \end{bmatrix}$.
\end{description}
\end{prop}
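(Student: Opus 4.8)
The plan is to handle the four assertions in turn, leaning on Proposition 1 (which identifies $w(A)$ with the disc $\mathcal{D}(0,\norm A_2)$) and on the single structural fact $H^{*}H=I_n$. For \textbf{I}, I would show that each generating value of $w_l(A)=F(H^{*}A)$ is also a generating value of $w_h(A)=F(AH^{*})$. Given a unit vector $u\in\Complex^{n}$, set $w=Hu\in\Complex^{m}$; then $\norm w_2^{2}=u^{*}H^{*}Hu=1$, so $w$ is a unit vector, and
\[
w^{*}AH^{*}w=u^{*}H^{*}A(H^{*}H)u=u^{*}H^{*}Au,
\]
using $H^{*}H=I_n$. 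Hence every point of $w_l(A)$ lies in $w_h(A)$, for every unitary $H\in\M_{m,n}$.

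For \textbf{II}, I would first record the chain $w_l(A)\subseteq w_h(A)\subseteq w(A)$: the left inclusion is part \textbf{I}, and for the right one, writing $x=H^{*}w$ with $\norm x_2\le\norm w_2=1$ (since $HH^{*}$ is an orthogonal projector) exhibits $w^{*}AH^{*}w=\norm x_2\,\bigl(w^{*}A(x/\norm x_2)\bigr)$ as a point of the disc $\norm x_2\,w(A)\subseteq w(A)$. Taking unions over $H$ therefore keeps everything inside $w(A)$. The reverse inclusion is the crux: given $z=y^{*}Ax\in w(A)$ with unit $x\in\Complex^{n}$ and $y\in\Complex^{m}$ (such a representation exists by the proof of Proposition 1), I would construct an isometry $H\in\M_{m,n}$ with $Hx=y$, by extending $x$ to an orthonormal basis of $\Complex^{n}$ and $y$ to an orthonormal system in $\Complex^{m}$ and defining $H$ on that basis. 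Then $H^{*}H=I_n$ and, with $u=x$, $u^{*}H^{*}Au=(Hx)^{*}Ax=y^{*}Ax=z$, so $z\in w_l(A)$. Combined with the chain above and part \textbf{I}, this yields $w(A)=\bigcup_{H}w_l(A)=\bigcup_{H}w_h(A)$.

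For \textbf{III}, with $H=\begin{bmatrix} I_n \\ 0 \end{bmatrix}$ one has $AH^{*}=\begin{bmatrix} A_1 & 0 \\ A_2 & 0 \end{bmatrix}$, so $w_h(A)=F(AH^{*})$. Writing each generating value as $w^{*}(AH^{*})w=w^{*}\mathcal{H}(AH^{*})w+w^{*}\mathcal{S}(AH^{*})w$ separates its real part, carried by the hermitian summand, from its imaginary part, carried by the skew-hermitian summand; thus $\RE w_h(A)=F(\mathcal{H}(AH^{*}))$ while the imaginary part is recorded by $F(\mathcal{S}(AH^{*}))$. A direct block computation of the hermitian and skew-hermitian parts of $AH^{*}$ then produces exactly the two displayed matrices. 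For \textbf{IV}, the block-triangular shape gives $\det(AH^{*}-\lambda I_m)=(-\lambda)^{m-n}\det(A_1-\lambda I_n)$, so the spectrum of $A_1$ sits inside the spectrum of $AH^{*}$; since the numerical range of a square matrix contains its eigenvalues, $\sigma(A_1)\subseteq F(AH^{*})=w_h(A)$, and $w_h(A)\subseteq w(A)$ by the chain from part \textbf{II}.

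The main obstacle is the reverse inclusion in part \textbf{II}: producing, for an arbitrary point $z=y^{*}Ax$ of the disc, an isometry $H$ realizing $Hx=y$. Everything else is either a direct substitution (\textbf{I}, \textbf{III}) or the standard spectrum-in-numerical-range fact (\textbf{IV}); the genuinely new ingredient is the freedom, available precisely because $m>n$, to choose $H$ so that the projected vector $Hx$ equals any prescribed unit vector $y$.
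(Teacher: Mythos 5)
Your proof is correct and follows essentially the same route as the paper: the chain $w_{l}(A)\subseteq w_{h}(A)\subseteq w(A)$, the construction of an isometry $H$ with $Hx=y$ to get the reverse inclusion in \textbf{II} (a detail the paper merely asserts and you fill in), the hermitian/skew-hermitian block computation for \textbf{III}, and eigenvalues-inside-the-numerical-range for \textbf{IV}. The only cosmetic deviations are in \textbf{I}, where the paper conjugates $AH^{*}$ by the unitary $\begin{bmatrix} H & R \end{bmatrix}$ and invokes the principal-submatrix inclusion instead of your direct substitution $w=Hu$, and in \textbf{IV}, where the paper places $\sigma(A_{1})$ in $F(A_{1})=w_{l}(A)$ and cites parts \textbf{I}--\textbf{II} rather than computing the spectrum of $AH^{*}$ from its block-triangular form.
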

\begin{proof}
\textbf{I.} Let the unitary matrix $U=\left[
                                         \begin{array}{cc}
                                           H & R \\
                                         \end{array}
                                       \right]\in\M_{m,m}$, where
$H\in\M_{m,n}$. Then
\[
w_{h}(A)=F(AH^{*})=F(U^{*}AH^{*}U)=F(\left[
                                       \begin{array}{cc}
                                         H^{*}A & 0 \\
                                         R^{*}A & 0 \\
                                       \end{array}
                                     \right])
\]
whereupon $w_{l}(A)=F(H^{*}A)\subseteq w_{h}(A)$.

\textbf{II.} Suppose\, $z\in\bigcup w_{l}(A)=\bigcup_{H}{F(H^{*}A)}$, then
for\, a \,$m\times n$ unitary\, matrix \,$H$
\[
\abs z\leq r(H^{*}A)\leq \norm{H^{*}A}_{2}\leq \norm{H^{*}}_{2} \norm A_{2}=\norm A_{2}
\]
where $r(\cdot)$ denotes the numerical radius of matrix. Thereby,
$\bigcup{w_{l}(A)}=\bigcup_{H}{F(H^{*}A)}\subseteq w(A)$.
On the other side, if $z=y^{*}Ax\in w(A)$, then there exists a $m\times n$
unitary matrix $H$ such that $y=Hx$ and $z=x^{*}(H^{*}A)x\in F(H^{*}A)$. The assertion
$\bigcup{w_{h}(A)}=w(A)$ is established similarly.

\textbf{III.} It is enough to confirm that for the $m\times n$ unitary matrix $H=\begin{bmatrix}
                                                                        I_{n} \\
                                                                        0 \\
                                                                      \end{bmatrix}$
$$\RE{w_{h}(A)}=\RE{F(\left[
                                                            \begin{array}{cc}
                                                              A & 0 \\
                                                            \end{array}
                                                          \right])=F(\mathcal{H}(\left[
                                                            \begin{array}{cc}
                                                              A & 0 \\
                                                            \end{array}
                                                          \right]))},$$
where $\mathcal{H}(\cdot)$ denotes the hermitian part of matrix. Similarly,
for
$\IM{w_{h}(A)}$.

\textbf{IV.} We need merely to apply cases (I) and (II) for the $m\times n$ unitary matrix
$H=\begin{bmatrix}
     I_{n} \\
     0     \\
   \end{bmatrix}$.
\end{proof}

By the definitions \eqref{w5},\eqref{w6} or \eqref{w7} it is clear
that the concept of sharp point \cite[p.50]{H.J.T} of $F(AH^{*})$ or
$F(H^{*}A)$ is transferred to the sharp point of $w_{h}(A)$ or
$w_{l}(A)$, respectively. Especially, we note:

\begin{prop}
Let $A\in\M_{m,n}$, $m>n$  and $\lambda_{0} (\neq 0)$ be sharp point of $w_{h}(A)=F(AH^{*})$ for
$H\in\M_{m,n}$, $H^{*}H=I_{n}$. Then
$\lambda_{0}\in \sigma(H^{*}A)$ and is also sharp point of
$w_{l}(A)=F(H^{*}A)$.
\end{prop}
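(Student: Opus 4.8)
The plan is to transfer the sharp point along the inclusion $w_{l}(A)\subseteq w_{h}(A)$ supplied by Proposition 8(I), after first pinning down the relevant spectra. Completing $H$ to a unitary $U=\begin{bmatrix} H & R \end{bmatrix}\in\M_{m,m}$, the proof of Proposition 8 shows that $AH^{*}$ is unitarily similar to the block lower triangular matrix
$$M=\begin{bmatrix} H^{*}A & 0 \\ R^{*}A & 0 \end{bmatrix},$$
so that $w_{h}(A)=F(AH^{*})=F(M)$, and $\sigma(AH^{*})=\sigma(M)=\sigma(H^{*}A)\cup\{0\}$, the last equality because the diagonal blocks of $M$ are $H^{*}A$ and $0_{m-n}$.

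First I would establish $\lambda_{0}\in\sigma(H^{*}A)$. Since a sharp point of a field of values is necessarily an eigenvalue of the underlying matrix (\cite[p.50]{H.J.T}), the sharp point $\lambda_{0}$ of $F(AH^{*})$ lies in $\sigma(AH^{*})=\sigma(H^{*}A)\cup\{0\}$; as $\lambda_{0}\neq0$ by hypothesis, it must belong to $\sigma(H^{*}A)$. In particular $\lambda_{0}\in F(H^{*}A)=w_{l}(A)$, because the spectrum is always contained in the numerical range.

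It then remains to upgrade $\lambda_{0}$ from a mere point of $w_{l}(A)$ to a sharp point of $w_{l}(A)$, and this is exactly where the inclusion $w_{l}(A)\subseteq w_{h}(A)$ enters through supporting lines. Being a sharp point of the convex set $w_{h}(A)$, the point $\lambda_{0}$ possesses a nondegenerate cone of supporting directions: there exist $\theta_{1}<\theta_{2}$ such that for every $\theta\in(\theta_{1},\theta_{2})$,
$$\RE(e^{-i\theta}\lambda_{0})=\max_{z\in w_{h}(A)}\RE(e^{-i\theta}z).$$
For each such $\theta$, the inclusion $w_{l}(A)\subseteq w_{h}(A)$ gives $\max_{z\in w_{l}(A)}\RE(e^{-i\theta}z)\leq\RE(e^{-i\theta}\lambda_{0})$, while $\lambda_{0}\in w_{l}(A)$ supplies the reverse inequality; hence equality holds and $\theta$ is a supporting direction of $w_{l}(A)$ at $\lambda_{0}$ as well. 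Thus the entire interval $(\theta_{1},\theta_{2})$ supports $w_{l}(A)$ at $\lambda_{0}$, which is precisely the statement that $\lambda_{0}$ is a sharp point of $w_{l}(A)=F(H^{*}A)$.

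I expect the only delicate point to be the handling of the definition of sharp point: one should phrase it through an interval of supporting directions $\theta$ (the formulation of \cite[p.50]{H.J.T}) rather than merely as a ``corner'', since it is exactly this formulation that passes verbatim across the inclusion $w_{l}(A)\subseteq w_{h}(A)$. The remaining ingredients, namely the unitary reduction to $M$, the spectral identity $\sigma(AH^{*})=\sigma(H^{*}A)\cup\{0\}$, and the membership of eigenvalues in the numerical range, are routine and already available from Proposition 8 and \cite{H.J.T}.
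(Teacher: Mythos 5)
Your proposal is correct and follows essentially the same route as the paper's own proof: identify $\lambda_{0}$ as a nonzero eigenvalue of $AH^{*}$ via the unitary completion $U=\begin{bmatrix} H & R \end{bmatrix}$ and the identity $\sigma(AH^{*})=\sigma(H^{*}A)\cup\{0\}$, place it in $w_{l}(A)$ through spectral containment, and then transfer the interval of supporting directions across the inclusion $w_{l}(A)\subseteq w_{h}(A)$ from Proposition 8(I). The only differences are cosmetic (your $e^{-i\theta}$ versus the paper's $e^{i\theta}$ convention, and your more explicit citation of the sharp-point-implies-eigenvalue fact).
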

\begin{proof}
For the sharp point
$\lambda_{0}\in\partial w_{h}(A)=\partial F(AH^{*})$ with $H^{*}H=I_{n}$ apparently,
$\lambda_{0}\in\sigma (AH^{*})=\sigma(U^{*}AH^{*}U)=\sigma(H^{*}A)\cup\set 0$,
for the unitary matrix $U=\begin{bmatrix}
                        H & R \\
                      \end{bmatrix}\in\M_{m,m}$, i.e. $\lambda_{0}\in\sigma(H^{*}A)\subseteq
F(H^{*}A)=w_{l}(A)$.

Moreover, for $\lambda_{0}$, according to the definition of sharp point, there exist $\theta_{1}, \theta_{2}\in
[0,2\pi)$, $\theta_{1}<\theta_{2}$ such that
\[
\RE{(e^{i\theta}\lambda_{0})}=\max{\set{{\RE
a: a\in e^{i\theta} w_{h}(A)}}}
\]
for all $\theta\in(\theta_{1}, \theta_{2})$.
Since $w_{h}(A)\supseteq w_{l}(A)$ we have
\[
\RE{(e^{i\theta}\lambda_{0})}=\max_{a\in e^{i\theta} w_{h}(A)}\RE
a\geq \max_{b\in e^{i\theta} w_{l}(A)}{\RE b}
\]
for all $\theta\in (\theta_{1}, \theta_{2})$.

Furthermore, for every $\theta\in (\theta_{1}, \theta_{2})$
\[
\RE{(e^{i\theta}\lambda_{0})}\in\RE{(e^{i\theta}F(H^{*}A))}\leq\max\set{\RE
b : b\in e^{i\theta} F(H^{*}A)}
\]
and \,thus\,
$\RE{(e^{i\theta}\lambda_{0})}=\max\set{\RE
b : b\in e^{i\theta} F(H^{*}A)} $ for\, all $\theta\in(\theta_{1}, \theta_{2})$,\,\, concluding that $\lambda_{0}
(\neq0)$ is sharp point of $F(H^{*}A)=w_{l}(A)$.
\end{proof}

For $m\times n$ unitary matrix $H= \begin{bmatrix} I_{n} \\ 0 \\ \end{bmatrix}$
we may obviously see the following corollary.

\begin{cor}
Let $A_{1}\in\M_{n,n}$ be the principal submatrix of $A\in\M_{m,n}$ and $\lambda_{0} (\neq 0)$
be sharp point of $w_{h}(A)=F(\begin{bmatrix}
                                                                                A & 0 \\
                                                                              \end{bmatrix})$.
Then $\lambda_{0}\in \sigma(A_{1})$ and is also sharp point of
$w_{l}(A)=F(A_{1})$.
\end{cor}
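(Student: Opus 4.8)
The plan is to recognize this Corollary as merely the specialization of Proposition 9 to the single admissible choice $H=\begin{bmatrix} I_{n} \\ 0 \end{bmatrix}\in\M_{m,n}$, so that the entire argument reduces to checking that this $H$ is legitimate and to making the two sets $w_{l}(A)$ and $w_{h}(A)$ explicit.

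First I would confirm that $H$ satisfies the hypothesis of Proposition 9, i.e.\ that it is an $m\times n$ matrix with orthonormal columns, which is immediate from $H^{*}H=\begin{bmatrix} I_{n} & 0 \end{bmatrix}\begin{bmatrix} I_{n} \\ 0 \end{bmatrix}=I_{n}$. Writing $A=\begin{bmatrix} A_{1} \\ A_{2} \end{bmatrix}$ with $A_{1}\in\M_{n,n}$ the principal submatrix, I would then evaluate the two products governing the definitions, obtaining $H^{*}A=A_{1}$ and $AH^{*}=\begin{bmatrix} A_{1} & 0 \\ A_{2} & 0 \end{bmatrix}=\begin{bmatrix} A & 0 \end{bmatrix}$. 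Hence $w_{l}(A)=F(H^{*}A)=F(A_{1})$ and $w_{h}(A)=F(AH^{*})=F(\begin{bmatrix} A & 0 \end{bmatrix})$, which are exactly the sets named in the statement.

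Finally I would apply Proposition 9 verbatim: since $\lambda_{0}\,(\neq 0)$ is a sharp point of $w_{h}(A)=F(AH^{*})$, that proposition yields both $\lambda_{0}\in\sigma(H^{*}A)=\sigma(A_{1})$ and that $\lambda_{0}$ is a sharp point of $w_{l}(A)=F(H^{*}A)=F(A_{1})$, which is the claim. I expect no genuine obstacle, since every substantive step — the transfer of the sharp-point property from $F(AH^{*})$ to $F(H^{*}A)$ and the spectral membership — is already carried out in Proposition 9. The only point meriting a line of care is the spectral identity $\sigma(AH^{*})=\sigma(H^{*}A)\cup\set 0$ underlying that proof, which for the present $H$ is just the elementary observation that, after the unitary reduction by $U=\begin{bmatrix} H & R \end{bmatrix}$, passing from $A_{1}$ to $\begin{bmatrix} A & 0 \end{bmatrix}$ only appends the eigenvalue $0$; the hypothesis $\lambda_{0}\neq 0$ is precisely what discards this spurious value so that $\lambda_{0}$ falls in $\sigma(A_{1})$.
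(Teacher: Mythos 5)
Your proposal is correct and matches the paper's own treatment: the paper proves this corollary exactly by specializing Proposition 9 to the unitary matrix $H=\begin{bmatrix} I_{n} \\ 0 \end{bmatrix}$, having already noted in Section 3 that this choice gives $w_{l}(A)=F(A_{1})$ and $w_{h}(A)=F(\begin{bmatrix} A & 0_{m\times(m-n)} \end{bmatrix})$. Your additional checks ($H^{*}H=I_{n}$, the products $H^{*}A=A_{1}$ and $AH^{*}=\begin{bmatrix} A & 0 \end{bmatrix}$, and the role of $\lambda_{0}\neq 0$ in discarding the appended zero eigenvalue) are exactly the details the paper leaves as "obvious."
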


It is noticed here that the converse of Proposition 9 does not hold as it is illustrated
in the next figure. If $A=\begin{bmatrix}
                 1+i & -7 & 0 \\
                 5i & 0.02 & 0 \\
                 0 & 0 & 6-i \\
                 0 & 0 & 0 \\
               \end{bmatrix}$
and $H=\begin{bmatrix}
         0 \\
       I_{3} \\
    \end{bmatrix}$,
$\lambda_{0}=5i$ is sharp point of $w_{l}(A)$ but not of $w_{h}(A)$. Note that by '$*$'
are denoted the eigenvalues $0$ and $5i$ of $AH^{*}$.
\begin{center}
    \includegraphics[width=0.45\textwidth]{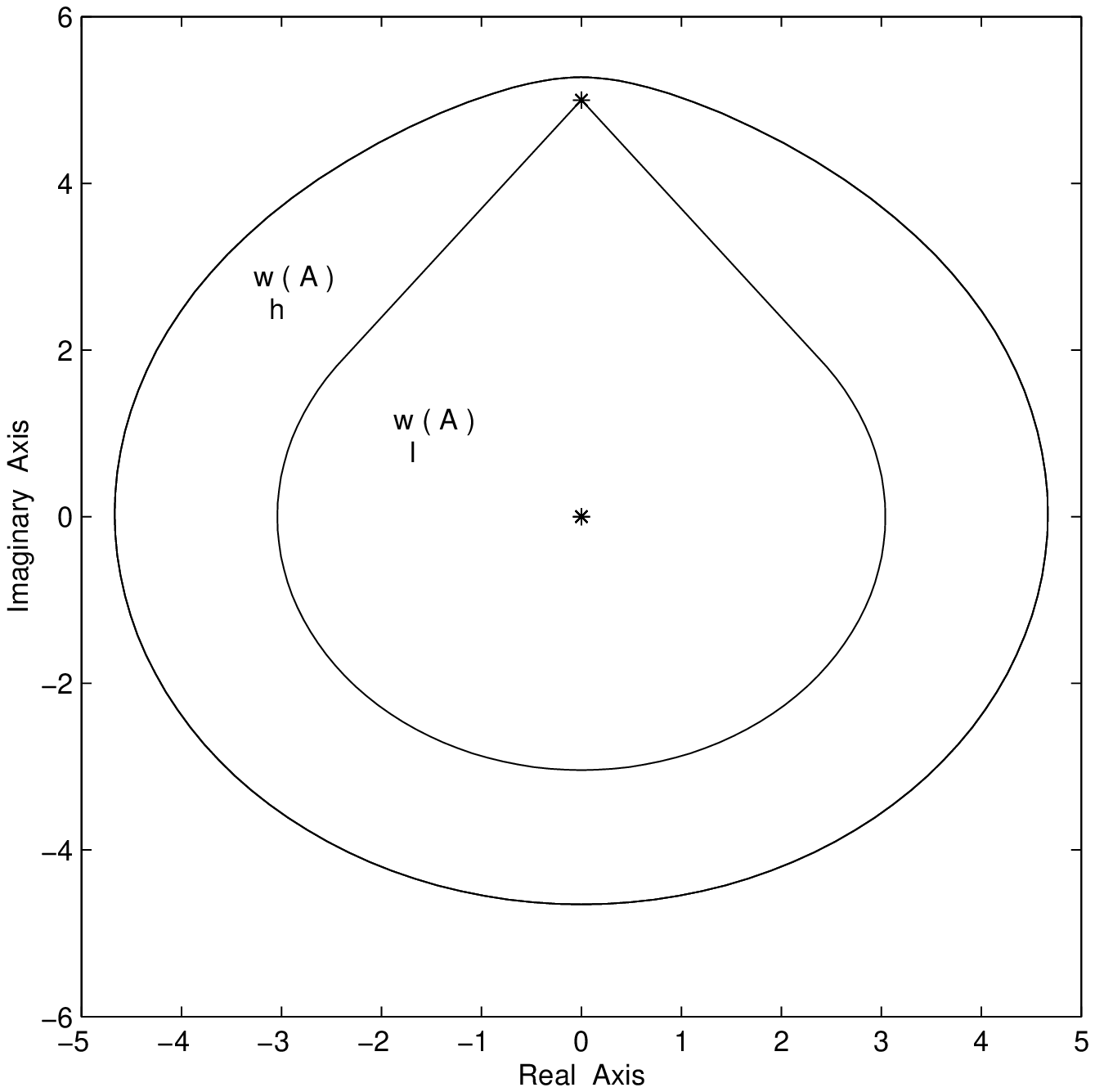}                            
\end{center}


\section{The rank-k numerical range}

In this section, initially, we note the easily confirmed properties
of $\phi_{k}(A)$ in \eqref{w10} :
\[
  \phi_{k}(cA)=c\phi_{k}(A),\,\,\, c \in\Complex\quad \textrm{and} \quad
  \phi_{k}(A^{*})=\overline{\phi_{k}(A)}.
\]
Also, in the next proposition we generalize some necessary and sufficient
conditions \cite{Choi} for $\Lambda_{k}(A)$ in \eqref{w9}, which are extended to $\phi_{k}(A)$.

\begin{prop}
Let $A\in \M_{m,n}$. The next expressions are equivalent.
\begin{description}
  \item[\,\,\, \textbf{I.}]
  $z\in \phi_{k}(A)$.
  \item[\,\,\textbf{II.}]
  There exist subspaces $\J\subseteq\Complex^{m}$ and
  $\K\subseteq\Complex^{n}$ such that $\dim{\J}=$
  $\dim{\K}=k$ and $(A-zS)\K\bot\J$.
  \item[\textbf{III.}]
  There exist orthonormal matrices $M\in\M_{m,k}$ and
  $N\in\M_{n,k}$
  such that $M^{*}AN=zI_{k}$.
  \item[\textbf{IV.}]
  $\seq{Av,u}=z\seq{\tilde{v},
  \tilde{u}}$, where $v=N\tilde{v}$, $u=M\tilde{u}$ and $M, N$ are the matrices
  in \textsc{(III)}.
  \item[\,\,\textbf{V.}]
  There exist subspaces $\Lom\subseteq \Complex^{m}$ and
  $\G\subseteq\Complex^{n}$ of dimension k, where
  $\seq{Av,u}=z\norm v\norm u$, for every $u\in \Lom$ and
  $v\in\G$.
\end{description}
\end{prop}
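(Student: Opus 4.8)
The plan is to prove the chain of implications $\textbf{I}\Rightarrow\textbf{II}\Rightarrow\textbf{III}\Rightarrow\textbf{IV}\Rightarrow\textbf{V}\Rightarrow\textbf{I}$, using throughout the standard dictionary between rank-$k$ orthogonal projections and matrices with orthonormal columns: a rank-$k$ orthogonal projection $P$ of $\Complex^{m}$ is $P=MM^{*}$ for some $M\in\M_{m,k}$ with $M^{*}M=I_{k}$ (whose columns form an orthonormal basis of $\operatorname{ran}P$), and similarly $Q=NN^{*}$ with $N\in\M_{n,k}$, $N^{*}N=I_{k}$. Consistently with the rank-1 model \eqref{w8} where $S=yx^{*}$, the matrix $S$ in \eqref{w10} is the rank-$k$ partial isometry $S=MN^{*}$ from $\operatorname{ran}Q$ onto $\operatorname{ran}P$; then $S=PSQ$ follows from $M^{*}M=N^{*}N=I_{k}$, and conversely the condition $S=PSQ$ together with the partial-isometry normalisation inherited from \eqref{w8} forces this form. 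The single identity that drives everything is $PAQ=zS\Longleftrightarrow M^{*}AN=zI_{k}$, obtained by left-multiplying the first equation by $M^{*}$ and right-multiplying by $N$, and restored by left-multiplying $M^{*}AN=zI_{k}$ by $M$ and right-multiplying by $N^{*}$. This makes $\textbf{III}$ the natural hub of the argument.

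For $\textbf{I}\Rightarrow\textbf{II}$ I would set $\J=\operatorname{ran}P$ and $\K=\operatorname{ran}Q$, both of dimension $k$. From $S=PSQ$ one gets $PS=S$ and $SQ=S$, so for $v\in\K$ (hence $Qv=v$) we have $P(A-zS)v=PAQv-z\,PSv=zSv-zSv=0$; thus $(A-zS)\K\perp\J$. For $\textbf{II}\Rightarrow\textbf{III}$ I would take orthonormal bases $M,N$ of $\J,\K$; since any partial isometry from $\K$ to $\J$ has the form $MWN^{*}$ with $W\in\M_{k,k}$ unitary, replacing $M$ by $MW$ lets me assume $S=MN^{*}$, and then $(A-zS)\K\perp\J$ reads $M^{*}(A-zS)N=0$, i.e. $M^{*}AN=z\,M^{*}MN^{*}N=zI_{k}$. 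The step $\textbf{III}\Rightarrow\textbf{IV}$ is a one-line rewriting: with $v=N\tilde v$ and $u=M\tilde u$, $\seq{Av,u}=\tilde u^{*}M^{*}AN\tilde v=z\,\tilde u^{*}\tilde v=z\seq{\tilde v,\tilde u}$.

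For $\textbf{IV}\Rightarrow\textbf{V}$ I would put $\Lom=\operatorname{ran}M$ and $\G=\operatorname{ran}N$; the diagonal choices $\tilde u=\tilde v=e_{i}$ give $\seq{Av_{i},u_{i}}=z$ for the paired orthonormal basis vectors, and, since $\norm v=\norm{\tilde v}$ and $\norm u=\norm{\tilde u}$ ($M,N$ having orthonormal columns), this is exactly the asserted relation $\seq{Av,u}=z\norm v\norm u$ read along the isometric pairing $v=N\tilde v\leftrightarrow u=M\tilde v$ of the two subspaces. Finally $\textbf{V}\Rightarrow\textbf{I}$ closes the loop: from the pairing I recover orthonormal bases $N=[\,v_{1}\ \cdots\ v_{k}\,]$ of $\G$ and $M=[\,u_{1}\ \cdots\ u_{k}\,]$ of $\Lom$ with $\seq{Av_{j},u_{i}}=z\delta_{ij}$, i.e. $M^{*}AN=zI_{k}$; setting $P=MM^{*}$, $Q=NN^{*}$, $S=MN^{*}$ yields rank-$k$ orthogonal projections with $S=PSQ$ and $PAQ=M(M^{*}AN)N^{*}=zMN^{*}=zS$, so $z\in\phi_{k}(A)$.

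The routine parts are the two matrix manipulations and the coordinate rewriting; the delicate point — and the one I would write most carefully — is the treatment of $S$. The existential quantifier on $S$ in the definition, the meaning of $S$ inside $\textbf{II}$, and the interpretation of $\textbf{V}$ all hinge on reading $S$ as the rank-$k$ partial isometry $MN^{*}$. In particular the literal ``for every $u\in\Lom$, $v\in\G$'' in $\textbf{V}$ can hold only along the isometric pairing (for $z\neq0$ the off-pairing inner products cannot equal $z\norm v\norm u$, by the Cauchy--Schwarz equality case), so the equivalence with $\textbf{III}$ rests on extracting this pairing and checking that, in the paired orthonormal bases, the compression of $A$ is the scalar matrix $zI_{k}$ rather than a general $k\times k$ matrix. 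Verifying that the unitary freedom $W$ in aligning the bases is harmless, and that the case $z=0$ is covered uniformly, are the remaining checks.
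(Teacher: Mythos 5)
Your handling of (I)--(IV) is, computation for computation, the paper's own proof: the same dictionary $P=MM^{*}$, $Q=NN^{*}$, $S=MN^{*}$ between rank-$k$ orthogonal projections and matrices with orthonormal columns, and the same hub identity $PAQ=zS\Longleftrightarrow M^{*}AN=zI_{k}$; that you arrange the implications in a cycle while the paper proves each of (II)--(V) equivalent to (I) directly is immaterial. Your explicit insistence that the $S$ in \eqref{w10} must be normalised to the partial isometry $MN^{*}$ is a genuine improvement in rigour: if $S$ were only required to satisfy $S=PSQ$, then $S:=PAQ/z$ would place every $z\neq0$ in $\phi_{k}(A)$, so the definition is vacuous without this reading. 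The paper makes the same normalisation, but silently, by writing ``denoting $S=MN^{*}$'' inside the proof.

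The real divergence is at (V), and there your route is sounder than the paper's. You are right that (V) as literally stated cannot hold for $z\neq0$ (replace $u$ by $-u$: the left side of $\seq{Av,u}=z\norm{v}\norm{u}$ changes sign, the right side does not), so the proposition can only be true under your reading along the isometric pairing $v=N\tilde w\leftrightarrow u=M\tilde w$. The paper's own proof of (I)$\Rightarrow$(V) builds new matrices $M,N$ from arbitrary $u\in span\set{u_{2},\ldots,u_{k}}^{\bot}$, $v\in span\set{v_{2},\ldots,v_{k}}^{\bot}$ and then ``applies statement (II)'' to the projections they generate; but (II) is an existence statement about the witnessing subspaces, and the new subspaces are witnessing ones only when $u,v$ are positive multiples of $u_{1},v_{1}$ --- i.e.\ exactly on your pairing (for $u=-u_{1}$ one gets $M^{*}AN=z\,diag(-1,1,\ldots,1)$, not $zI_{k}$) --- while the converse is waved off as ``received trivially.'' One step of yours does still need to be written out: in (V)$\Rightarrow$(I) the pairing hands you only the diagonal data $\seq{Av_{i},u_{i}}=z$, and the off-diagonal zeros of $M^{*}AN=zI_{k}$ require polarization --- apply the pairing identity to $\tilde w=e_{i}+e_{j}$ and to $\tilde w=e_{i}+ie_{j}$, which give $\seq{Av_{j},u_{i}}+\seq{Av_{i},u_{j}}=0$ and $\seq{Av_{j},u_{i}}-\seq{Av_{i},u_{j}}=0$ respectively. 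You flag exactly this as the delicate point but do not carry it out; it is the only opening left in your cycle, and with it closed your argument is a complete, and corrected, version of the paper's.
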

\begin{proof}
We prove that (I) is equivalent to
(II), (III), (IV) and (V).

\textbf{II.} For $z\in \phi_{k}(A)$, clearly by \eqref{w10}
$P(A-zS)Q=0$. If $\J=Im(P)\subseteq \Complex^{m}$ and $\K
=Im(Q)\subseteq\Complex^{n}$, then $\dim{\J}=\dim{\K}=k$ and for
every $x\in \K$, $y\in\J$, we have
\[
\quad\,\,\,\seq{(A-zS)x,y} = \seq{(A-zS)Qx',Py'} =
\seq{P^{*}(A-zS)Qx', y'}
\]
\[
\!\!=\seq{P(A-zS)Qx',y'} = 0
\]
whereupon $(A-zS)\K\bot\J$. Conversely, by orthogonality we have :
\[
\seq{(A-zS)x,y}=0 \quad\forall\, x\in \mathcal{K}, y\in \mathcal{J}
\,\,\Rightarrow
\]
\[
\seq{(A-zS)Qx',Py'}=0 \quad\forall \,x', y'
\,\,\Rightarrow
\seq{P(A-zS)Qx',y'}=0 \quad\forall\, x', y' \,\,\Rightarrow
\]
\[
\!\!\!\!\!\!\!\!\!\!\!\!\!\!\!\!\!\!\!\!\!
\!\!P(A-zS)Q=0 \Rightarrow
 z\in
\phi_{k}(A).
\]

\textbf{III.} Let\, the matrices $M= \left[
                                     \begin{array}{ccc}
                                       u_{1} & \ldots & u_{k} \\
                                     \end{array}
                                   \right]$
and $N=\left[
         \begin{array}{ccc}
           v_{1} & \ldots & v_{k} \\
         \end{array}
       \right]$,
where \,their \,columns $u_{j}$,\, $v_{i}$ \,constitute \,orthonormal \,bases of
$\J$\, and \,$\K$ \,in (II), respectively. Then, by
statement (II) :
\[
0=\seq{(A-zS)v_{i},u_{j}} = \seq{Av_{i},u_{j}}- z\seq{Sv_{i},u_{j}}.
\]
Denoting by $S=MN^{*}=\sum_{l=1}^{k}{u_{l}v_{l}^{*}}$, we obtain
\[
\seq{Av_{i},u_{j}}=z\langle\sum_{l=1}^{k}{u_{l}v_{l}^{*}v_{i},\,
u_{j}}\rangle=z\sum_{l=1}^{k}{u_{j}^{*}u_{l}v_{l}^{*}v_{i}}=z
\]
for $l=i=j$, and thereby $M^{*}AN=zI_{k}$. For the converse, by the
equation $M^{*}AN=zI_{k}$ with $M^{*}M=N^{*}N=I_{k}$ we have
$\seq{Av_{i},u_{j}}=\delta_{ij}z$, for $i, j=1, \ldots, k$, where
$\delta_{ij}$ is the Kronecker symbol. Hence, $PAQ=zS$, where
$P=MM^{*}$, $Q=NN^{*}$ and $S=MN^{*}$, i.e. $z\in \phi_{k}(A)$.

\textbf{IV.} If $u=\lambda_{1}u_{1}+ \ldots+ \lambda_{k}u_{k}$ and $v=\mu_{1}v_{1}+
\ldots+ \mu_{k}v_{k}$, then by (III):
\[
\seq{Av,u}=u^{*}Av= \left[\begin{array}{ccc}
                      \bar{\lambda}_{1} & \ldots & \bar{\lambda}_{k} \\
                    \end{array}\right]M^{*}AN\left[\begin{array}{c}
                                    \mu_{1} \\
                                    \vdots \\
                                    \mu_{k} \\
                                  \end{array}\right]
\]
\[
\quad\quad=z\left[\begin{array}{ccc}
                      \bar{\lambda}_{1} & \ldots & \bar{\lambda}_{k} \\
                    \end{array}\right]\left[\begin{array}{c}
                                    \mu_{1} \\
                                    \vdots \\
                                    \mu_{k} \\
                                  \end{array}\right]
                                  =z\seq{\tilde{v},\tilde{u}}.
\]

Conversely, by the equation
$\seq{Av,u}=z\seq{\tilde{v},\tilde{u}}$, for $v=v_{i}=Ne_{i}$
and $u=u_{j}=Me_{j}$, where $e_{i}, e_{j}$ are vectors of standard basis of $\Complex^{k}$, we have
\[
\seq{Av_{i},u_{j}}=z\seq{e_{i},e_{j}}\,\,\Rightarrow \,\,u_{j}^{*}Av_{i}=e_{j}^{*}M^{*}ANe_{i}=\delta_{ij}z\,\,\,;\, i, j= 1, \ldots, k
\]
or equivalently $M^{*}AN=zI_{k}$, i.e. $z\in\phi_{k}(A)$.

\textbf{V.} Let $z\in\phi_{k}(A)$ and $u\in span\set{u_{2}, \ldots,
u_{k}}^{\bot}$, $v\in span\set{v_{2}, \ldots, v_{k}}^{\bot}$, where $u_{j}\in\Complex^{m}$,
$v_{i}\in\Complex^{n}$ are orthonormal vectors.
Denoting by
\[
M=\left[\begin{array}{cccc}
      \frac{u}{\norm u} & u_{2} & \ldots & u_{k} \\
    \end{array}\right],\,\, N=\left[\begin{array}{cccc}
      \frac{v}{\norm v} & v_{2} & \ldots & v_{k} \\
    \end{array}\right]
\]
clearly $M^{*}M=N^{*}N=I_{k}$. By statement (II) and for $P=MM^{*}$, $Q=NN^{*}$ and
$S=MN^{*}$ we have $(A-zS)\G\bot\Lom$, where $\G=Im(Q)$, $\Lom=Im(P)$. Thus, we obtain
\[
\seq{(A-zS)v,u}=0 \Rightarrow \seq{Av,u}=z\seq{Sv,u}=z\seq{MN^{*}v,
u}=z\seq{N^{*}v,M^{*}u}
\]
\[
\quad\quad\quad\quad\quad=z\frac{v^{*}v}{\norm v}\frac{u^{*}u}{\norm
u}=z\norm v\norm u.
\]
The converse is received trivially, completing the proof.
\end{proof}

\begin{prop}
The rank-k numerical range $\phi_{k}(A)$ for a rectangular
matrix $A\in\M_{m,n}$ satisfies the relationship
\[
w(A)=\phi_{1}(A)\supseteq \phi_{2}(A)\supseteq \ldots
\supseteq \phi_{\tau}(A)
\]
where $\tau=\min{\set{m,n}}$.
\end{prop}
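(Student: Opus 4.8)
The plan is to read everything off the equivalence of statements (I) and (III) in the previous proposition, which replaces the projection formulation of $\phi_k(A)$ by the clean matrix equation $M^{*}AN=zI_k$ for orthonormal $M\in\M_{m,k}$ and $N\in\M_{n,k}$. I would first settle the base case $w(A)=\phi_1(A)$ and then reduce the entire chain to the single inclusion $\phi_{k+1}(A)\subseteq\phi_k(A)$, which I iterate. For the base case, apply characterization (III) with $k=1$: an orthonormal matrix $M\in\M_{m,1}$ is nothing but a unit vector $y\in\Complex^m$, and likewise $N\in\M_{n,1}$ is a unit vector $x\in\Complex^n$, so the scalar equation $M^{*}AN=zI_1$ reads $y^{*}Ax=z$. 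Ranging over all such $x,y$ recovers exactly the defining set \eqref{w2} of $w(A)$, whence $\phi_1(A)=w(A)$.

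For the inclusions, let $z\in\phi_{k+1}(A)$ and use (III) to produce orthonormal $M\in\M_{m,k+1}$ and $N\in\M_{n,k+1}$ with $M^{*}AN=zI_{k+1}$. I would then simply truncate: let $\tilde{M}$ and $\tilde{N}$ consist of the first $k$ columns of $M$ and $N$. Since a subset of an orthonormal family is again orthonormal, $\tilde{M}^{*}\tilde{M}=I_k$ and $\tilde{N}^{*}\tilde{N}=I_k$, while $\tilde{M}^{*}A\tilde{N}$ is precisely the leading $k\times k$ principal block of $M^{*}AN=zI_{k+1}$, namely $zI_k$. The converse direction of (III) then yields $z\in\phi_k(A)$, and iterating this step from $k=1$ up to $k=\tau-1$ produces the full nested sequence.

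There is no real obstacle once characterization (III) is in hand; the core of the argument is a one-line submatrix observation. The only point demanding attention is the range of the index: the construction requires $k+1$ orthonormal columns simultaneously in $\Complex^m$ and in $\Complex^n$, which is possible exactly when $k+1\le\min\{m,n\}=\tau$. This is precisely what forces the chain to terminate at $\phi_\tau(A)$ and explains the upper bound $\tau$ appearing in the statement.
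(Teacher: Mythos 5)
Your proposal is correct and follows essentially the same route as the paper: the paper likewise invokes characterization (III) of the preceding proposition and drops the last column of $M=\left[\begin{array}{cc} M_{1} & u_{k}\end{array}\right]$ and $N=\left[\begin{array}{cc} N_{1} & v_{k}\end{array}\right]$ to get $M_{1}^{*}AN_{1}=zI_{k-1}$, exactly your truncation step. The only cosmetic difference is that the paper handles the identification $\phi_{1}(A)=w(A)$ via statement (V) (unit vectors $u,v$ give $\seq{Av,u}=z$) rather than via (III) as you do, and it states the inclusion $\phi_{k}(A)\subseteq\phi_{1}(A)$ separately even though, as in your argument, it already follows by iterating the one-step inclusion.
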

\begin{proof}
Let $z\in \phi_{k}(A)$ and $u$, $v$ are unit vectors of $\Complex^{m}$ and $\Complex^{n}$. Then, by
proposition  11(V), we derive $\seq{Av,u}=z$, i.e. $z\in
\phi_{1}(A)=w(A)$. Hence,
$\phi_{k}(A)\subseteq\phi_{1}(A)$ for every k. Besides, if
$z\in\phi_{k}(A)$, by Proposition 11(III) we have
$M^{*}AN=zI_{k}$, where $M=\left[
                                                             \begin{array}{ccc}
                                                               u_{1}& \ldots & u_{k} \\
                                                             \end{array}
                                                           \right]=\left[
                                                                     \begin{array}{cc}
                                                                       M_{1} & u_{k} \\
                                                                     \end{array}
                                                                   \right]
$ and $N=\left[\begin{array}{ccc}
                             v_{1}& \ldots & v_{k} \\
\end{array}\right]=\left[
                     \begin{array}{cc}
                       N_{1} & v_{k} \\
                     \end{array}
                   \right]
$ are orthonormal matrices. Then $M_{1}^{*}AN_{1}=zI_{k-1}$, i.e.
$z\in\phi_{k-1}(A)$, concluding that
$\phi_{k}(A)\subseteq\phi_{k-1}(A)$ for $k=2, \ldots,
\tau$, where  $\tau=\min{\set{m,n}}$.
\end{proof}

Following, we present some additional properties :

\begin{prop}
Let $A\in \M_{m,n}$, then for $\phi_{k}(A)$ in \eqref{w10}, holds:
\begin{description}
  \item[ \,\,\,I.]
  $\,\phi_{k}(U^{*}AV)=\phi_{k}(A)$, where $U\in\M_{m,m}$
  and $V\in\M_{n,n}$ are unitary matrices.
  \item[ II.]
  $\,\phi_{k}(A)=\phi_{k}(e^{i\theta}A)$ for every $\theta\in[0,2\pi)$.
  \item[III.]
  If $z\in\phi_{k}(A)$, then  $\RE z \in\Lambda_{k}(\begin{bmatrix}
                                                             0 & A \\
                                                             A^{*} & 0 \\
                                                           \end{bmatrix})=[-\sigma_{k}, \sigma_{k}]$
 and $\IM z\in\Lambda_{k}(\begin{bmatrix}
                                                                                    0 & -iA \\
                                                                                    iA^{*} & 0 \\
                                                                                  \end{bmatrix})=[-i\sigma_{k}, i\sigma_{k}]$,
 where $\sigma_{1}\geq\sigma_{2}\geq\ldots\geq\sigma_{q}>0$ denote the decreasingly ordered
singular values of $A$, counting multiplicities.
\end{description}
\end{prop}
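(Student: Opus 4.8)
The whole argument rests on the equivalent description of $\phi_k$ given in Proposition 11(III): $z\in\phi_k(A)$ if and only if there exist orthonormal matrices $M\in\M_{m,k}$ and $N\in\M_{n,k}$ with $M^{*}AN=zI_{k}$. I would record this reformulation first and use it throughout, since it turns each assertion into a statement about orthonormal pairs $M,N$ rather than about projections directly.

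For (I), take $z\in\phi_k(U^{*}AV)$ and choose orthonormal $M,N$ with $M^{*}(U^{*}AV)N=zI_{k}$. Setting $\tilde M=UM$ and $\tilde N=VN$, unitarity of $U,V$ gives $\tilde M^{*}\tilde M=M^{*}M=I_{k}$ and $\tilde N^{*}\tilde N=N^{*}N=I_{k}$, while $\tilde M^{*}A\tilde N=M^{*}U^{*}AVN=zI_{k}$; hence $z\in\phi_k(A)$, and the reverse inclusion follows by symmetry (replace $A$ by $U^{*}AV$ and $U,V$ by $U^{*},V^{*}$). For (II), I would combine the scaling identity $\phi_k(cA)=c\phi_k(A)$ recorded at the start of this section with the rotation-invariance of the set $\phi_k(A)$: if $M^{*}AN=zI_{k}$ then $e^{i\theta}N$ is again orthonormal and $M^{*}A(e^{i\theta}N)=e^{i\theta}zI_{k}$, so $e^{i\theta}z\in\phi_k(A)$; thus $e^{i\theta}\phi_k(A)=\phi_k(A)$ and therefore $\phi_k(e^{i\theta}A)=e^{i\theta}\phi_k(A)=\phi_k(A)$.

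The substance is in (III). Given $z\in\phi_k(A)$ with orthonormal $M,N$ as above, I would form the normalized concatenation $W=\tfrac{1}{\sqrt2}\begin{bmatrix} M \\ N \end{bmatrix}\in\M_{m+n,k}$. Then $W^{*}W=\tfrac12(M^{*}M+N^{*}N)=I_{k}$, so $P=WW^{*}$ is a rank-$k$ orthogonal projection. Writing $B=\begin{bmatrix} 0 & A \\ A^{*} & 0 \end{bmatrix}$, a direct block multiplication yields $W^{*}BW=\tfrac12\bigl(M^{*}AN+N^{*}A^{*}M\bigr)=\tfrac12(z+\bar z)I_{k}=(\RE z)I_{k}$, and consequently $PBP=(\RE z)P$, so $\RE z\in\Lambda_k(B)$ by \eqref{w9}. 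The same $W$ applied to $B'=\begin{bmatrix} 0 & -iA \\ iA^{*} & 0 \end{bmatrix}$ gives $W^{*}B'W=\tfrac12(-iz+i\bar z)I_{k}=(\IM z)I_{k}$, so $\IM z\in\Lambda_k(B')$.

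Finally I would evaluate the two rank-$k$ numerical ranges. Both $B$ and $B'$ are Hermitian and satisfy $B^{2}=B'^{2}=\mathrm{diag}(AA^{*},A^{*}A)$, so their spectra coincide and consist exactly of $\pm\sigma_{1},\ldots,\pm\sigma_{q}$ together with $m+n-2q$ zeros. For a Hermitian matrix of order $N=m+n$ the rank-$k$ numerical range is the interval $[\lambda_{N-k+1},\lambda_{k}]$ bounded by its $k$-th largest and $k$-th smallest eigenvalues (the description of $\Lambda_k$ for normal matrices from \cite{Choi}); here these are $\sigma_{k}$ and $-\sigma_{k}$ (with the convention $\sigma_{k}=0$ when $k$ exceeds $q$), the interval being nonempty since $k\le\tau=\min\{m,n\}\le N/2$. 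This gives $\Lambda_k(B)=\Lambda_k(B')=[-\sigma_{k},\sigma_{k}]$ and completes (III). The main obstacle is precisely this last step together with guessing the dilation $W$: one must recognize that stacking $M$ and $N$ and halving converts the off-diagonal condition $M^{*}AN=zI_{k}$ into an eigenvalue-type condition $PBP=(\RE z)P$ for the Hermitian block matrices, and then carry out the spectral bookkeeping correctly; parts (I) and (II) are immediate from Proposition 11(III).
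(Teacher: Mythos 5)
Your proof is correct and takes essentially the same route as the paper's: parts (I)--(II) absorb the unitaries and the phase $e^{i\theta}$ into the orthonormal factors supplied by Proposition 11(III), and part (III) uses the identical dilation $W=\frac{1}{\sqrt{2}}\begin{bmatrix}M\\ N\end{bmatrix}$, the rank-$k$ projector $WW^{*}$, and the Hermitian interval theorem (Theorem 2.4 of \cite{C-K-Z}) to get $[-\sigma_{k},\sigma_{k}]$. One small caution: for Hermitian matrices the equality $B^{2}=B'^{2}$ alone does not force $\sigma(B)=\sigma(B')$ (compare $\mathrm{diag}(1,-1)$ with $I_{2}$), so to identify both spectra as $\pm\sigma_{1},\ldots,\pm\sigma_{q}$ plus zeros you should add that the spectrum of each anti-diagonal block matrix is symmetric about $0$, or simply note that $B'$ is the same construction applied to $-iA$, which has the same singular values as $A$ --- this is the standard fact the paper just cites from \cite{H.J.}.
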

\begin{proof}
\textbf{I.} Let $z\in\phi_{k}(U^{*}AV)$, then for suitable
unitary matrices $M\in\M_{m,k}$ and $N\in\M_{n,k}$ we have
$M^{*}U^{*}AVN=zI_{k}\,\,\,\Rightarrow\,\,\,(UM)^{*}A(VN)=zI_{k}$
i.e. $z\in\phi_{k}(A)$. Thus
$\phi_{k}(U^{*}AV)\subseteq\phi_{k}(A)$.

Conversely, if
$z\in\phi_{k}(A)$, then $R^{*}AT=zI_{k}$, where
$R\in\M_{m,k}$ and $T\in\M_{n,k}$ are unitary. Clearly we can write $R=UM$ and
$T=VN$, where $U$ and $V$ are defined by orthonormal bases of
$\Complex^{m}$ and $\Complex^{n}$, respectively. Therefore,
$M^{*}(U^{*}AV)N=zI_{k}$, i.e. $z\in\phi_{k}(U^{*}AV)$.

\textbf{II.}
Assume $M\in\M_{m,k}$ and $N\in\M_{n,k}$ such that $M^{*}M=N^{*}N=I_{k}$, then
\begin{eqnarray*}
   \phi_{k}(A) & = & \set{z\in\Complex : M^{*}AN=zI_{k}} \\
               & = & \{z\in\Complex : (M^{*}e^{-i\theta})(e^{i\theta}A)N=zI_{k}\} \\
               & = & \{z\in\Complex : (e^{i\theta}M)^{*}(e^{i\theta}A)N=zI_{k}\} \\
               & = & \{z\in\Complex : M_{1}^{*}(e^{i\theta}A)N=zI_{k}\}=\phi_{k}(e^{i\theta}A)
\end{eqnarray*}
for every $\theta\in[0,2\pi)$, since $M_{1}^{*}M_{1}=M^{*}M=I_{k}$. That is, the set $\phi_{k}(A)$
is circular.

\textbf{III.} By \eqref{w10}, let $PAQ=zS$, where $P=MM^{*}$, $Q=NN^{*}$
and $S=MN^{*}$. Then, $M^{*}AN=zI_{k}$ and consequently
\begin{equation}\label{pr12}
    (\RE z)I_{k}=\frac{1}{2}\begin{bmatrix}
                              M^{*} & N^{*} \\
                            \end{bmatrix}
                            \begin{bmatrix}
                              0 & A \\
                              A^{*} & 0 \\
                            \end{bmatrix}
                            \begin{bmatrix}
                              M \\
                              N \\
                            \end{bmatrix}
\end{equation}
Denoting by $T=\frac{1}{\sqrt{2}}\begin{bmatrix}
                                   M \\
                                   N \\
                                 \end{bmatrix}\in\M_{(m+n),k}$,
then $T^{*}T=\frac{1}{2}(M^{*}M+N^{*}N)=I_{k}$ and the $(m+n)\times (m+n)$
matrix $G=TT^{*}=\frac{1}{2}\begin{bmatrix}
                   P & S \\
                   S^{*} & Q \\
                 \end{bmatrix}$ is rank-$k$ orthogonal projector,
because rank$T=k$ and
\[
G^{2}=\frac{1}{4}\begin{bmatrix}
                   P^{2}+SS^{*} & PS+SQ \\
                   S^{*}P+QS^{*} & S^{*}S+Q^{2} \\
                 \end{bmatrix}=\frac{1}{4}
                 \begin{bmatrix}
                 P+P & 2S \\
                 2S^{*} & Q+Q \\
                 \end{bmatrix}=G.
\]
Thus, by \eqref{pr12} we obtain $(\RE z)G=G\begin{bmatrix}
                                   0 & A \\
                                   A^{*} & 0 \\
                                 \end{bmatrix}G$, i.e. $\RE z\in \Lambda_{k}(\begin{bmatrix}
                                                                               0 & A \\
                                                                               A^{*} & 0 \\
                                                                             \end{bmatrix})$.
Similarly, we derive \,$\IM z\in\Lambda_{k}(\begin{bmatrix}
                                  0 & -iA \\
                                  iA^{*} & 0 \\
                                \end{bmatrix})$. Moreover, due to the $(m+n)\times$ $(m+n)$ hermitian matrix  $\begin{bmatrix}
                                                                               0 & A \\
                                                                               A^{*} & 0 \\
                                                                             \end{bmatrix}$
having eigenvalues $-\sigma_{1}\leq-\sigma_{2}\leq\ldots\leq-\sigma_{q}<0<\sigma_{q}\leq\ldots\leq\sigma_{2}\leq\sigma_{1}$,
\cite{H.J.} and the multiplicity\, of $\lambda=0$\, being equal to\, $m+n-2q$,
we \,\,\,verify \cite[Th. 2.4]{C-K-Z} that $\Lambda_{k}(\begin{bmatrix}
                                                                                     0 & A \\
                                                                                     A^{*} & 0 \\
                                                                                   \end{bmatrix})=[-\sigma_{k}, \sigma_{k}]$
and $\Lambda_{k}(\begin{bmatrix}
                                                                                     0 & -iA \\
                                                                                     iA^{*} & 0 \\
                                                                                   \end{bmatrix})=[-i\sigma_{k},i\sigma_{k}]$.
\end{proof}
A more precise description of $\phi_{k}(A)$ is given in the next proposition.
\begin{prop}
Let $A\in\M_{m,n}$ and $\sigma_{1}\geq\sigma_{2}\geq\ldots\geq\sigma_{\min\set{m,n}}$ be its singular values.
\begin{description}
  \item[\,\,\,\,\,I.] If for the index $k$, $\max\set{\frac{m}{2},\frac{n}{2}}<k\leq \frac{m+n+1}{3}$, then $\phi_{k}(A)$
  is equal to the\, ring\, $\mathcal{R}(0;\sigma_{m+n-2k+1}, \sigma_{k})$.
  \item[\,\,II.] If $k>\frac{m+n+1}{3}$, then $\phi_{k}(A)$ is the empty set.
  \item[III.] If $k\leq\max\set{\frac{m}{2},\frac{n}{2}}$, then $\phi_{k}(A)$ is identified with the circular disc
  $\mathcal{D}(0,\sigma_{k})$.
\end{description}
\end{prop}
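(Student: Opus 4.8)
The plan is to reduce everything to a statement about two-sided compressions of a diagonal matrix and then to read off the two radii from a single interlacing inequality. First I would use Proposition 13(I) to replace $A$ by its rectangular singular value matrix $\Sigma=U^{*}AV=\mathrm{diag}(\sigma_{1},\dots,\sigma_{\min\{m,n\}})$, and Proposition 13(II) to record that $\phi_{k}(A)=\phi_{k}(\Sigma)$ is circular; hence it is a disc or a ring centred at $0$, and it suffices to decide, for each real $r\ge 0$, whether $r\in\phi_{k}(A)$. By Proposition 11(III) this is exactly the question whether there are orthonormal $M\in\M_{m,k}$, $N\in\M_{n,k}$ with $M^{*}\Sigma N=rI_{k}$, i.e.\ whether $rI_{k}$ is a two-sided compression of $\Sigma$. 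I would assume throughout $m\le n$ (otherwise pass to $A^{*}$, using $\phi_{k}(A^{*})=\overline{\phi_{k}(A)}$ together with circularity).

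The engine is the interlacing inequality for compressions: if $M,N$ have orthonormal columns then, writing $\sigma_{i}(A)=0$ for $i>\min\{m,n\}$,
\[
\sigma_{j+m+n-2k}(A)\ \le\ \sigma_{j}(M^{*}AN)\ \le\ \sigma_{j}(A),\qquad j=1,\dots,k.
\]
I would derive this by applying twice the standard bounds $\sigma_{j}(XY)\le\|X\|_{2}\,\sigma_{j}(Y)$ and the deletion interlacing $\sigma_{j+N-k}(Y)\le\sigma_{j}(P^{*}Y)\le\sigma_{j}(Y)$ for an isometry $P\in\M_{N,k}$ (see \cite{H.J.T}): first to $AN$ (a compression of $A$ in the $N$-variable), then to $M^{*}(AN)$. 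Now specialise to $B=M^{*}\Sigma N=rI_{k}$, whose singular values are all equal to $r$. Taking $j=k$ in the upper bound gives $r\le\sigma_{k}$, and taking $j=1$ in the lower bound gives $r\ge\sigma_{m+n-2k+1}$. This already proves $\phi_{k}(A)\subseteq\mathcal{R}(0;\sigma_{m+n-2k+1},\sigma_{k})$; in the regime $k\le\max\{m/2,n/2\}$ (i.e.\ $2k\le\max\{m,n\}$) one has $\sigma_{m+n-2k+1}=0$, so $\phi_{k}(A)\subseteq\mathcal{D}(0,\sigma_{k})$; and for $k>\tfrac{m+n+1}{3}$ one has $m+n-2k+1<k$, so $\sigma_{m+n-2k+1}>\sigma_{k}$ (strict separation of singular values), whence the two necessary conditions $r\ge\sigma_{m+n-2k+1}>\sigma_{k}\ge r$ are incompatible and $\phi_{k}(A)=\emptyset$, which is (II).

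It remains to prove the reverse inclusions, that every admissible $r$ is actually attained, and this is the crux. For the disc case (III) I would give a direct construction: choose primaries $c_{j}\in\{1,\dots,k\}$ and disjoint secondaries $c'_{j}$, set $u_{j}=\hat e_{c_{j}}$ and $v_{j}=\cos\theta_{j}\,e_{c_{j}}+\sin\theta_{j}\,e_{c'_{j}}$; because the supports are disjoint the off-diagonal terms $u_{i}^{*}\Sigma v_{j}$ ($i\neq j$) vanish automatically, while $u_{j}^{*}\Sigma v_{j}=\sigma_{c_{j}}\cos\theta_{j}$ can be tuned to any $r\in[0,\sigma_{k}]$ (here $\cos\theta_{j}=r/\sigma_{j}\le 1$). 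The hypothesis $2k\le\max\{m,n\}$ is exactly what provides enough disjoint coordinates, and yields $M^{*}\Sigma N=rI_{k}$ for all $r\le\sigma_{k}$. For the ring case (I) this disjoint-support idea fails: when $2k>\max\{m,n\}$ the supports must overlap, spurious off-diagonal terms appear, and this overlap is precisely what pushes the attainable $r$ up off $0$ to the inner radius $\sigma_{m+n-2k+1}$. Here I would instead invoke the realizability half of the singular-value interlacing theorem (the rectangular/Thompson analogue of Fan–Pall): the inequalities above are also sufficient for a two-sided compression with a prescribed singular value list to exist, so the constant list $(r,\dots,r)$ with $\sigma_{m+n-2k+1}\le r\le\sigma_{k}$ is realised by some $M^{*}\Sigma N=rI_{k}$, giving $r\in\phi_{k}(A)$.

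The main obstacle is exactly this realization step in case (I): producing $2k$ mutually orthonormal balanced vectors $w_{j}=\tfrac1{\sqrt2}\binom{u_{j}}{v_{j}}$ while killing all cross terms requires an entangled pairing of high and low singular directions (the $2\times2$-rotation mechanism underlying Fan–Pall), not the naive disjoint pairing that settles the disc. I would also treat separately the boundary values $r=\sigma_{k}$ and $r=\sigma_{m+n-2k+1}$, the edge $3k=m+n+1$ (where the ring collapses to a circle), and repeated singular values; in the presence of a plateau $\sigma_{m+n-2k+1}=\sigma_{k}$ statement (II) should be read as the genuinely empty case $\sigma_{m+n-2k+1}>\sigma_{k}$. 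Combining the two inclusions with circularity then yields (I), (II) and (III).
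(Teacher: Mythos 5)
Your argument is correct and follows the same skeleton as the paper's proof: reduce to the diagonal matrix $\Sigma$ via Proposition 13(I), use circularity (Proposition 13(II)) so that only the attainable moduli $r=\abs z$ matter, deduce $\sigma_{m+n-2k+1}\le r\le\sigma_{k}$ from interlacing of singular values under two-sided compression, and obtain attainability from the converse (realizability) half of that interlacing theorem --- in the paper these two halves are exactly Theorems 1 and 2 of \cite{Thompson}. What you do differently buys something real in two places. First, you re-derive the interlacing inequalities from the standard deletion interlacing applied twice (to $AN$, then to $M^{*}(AN)$), so the necessity half is self-contained rather than a citation. Second, in case (III) you bypass the realization theorem entirely with the disjoint-support construction $u_{j}=\hat e_{j}$, $v_{j}=\cos\theta_{j}\,e_{j}+\sin\theta_{j}\,e_{c'_{j}}$, which is more elementary than the paper's one-sentence treatment (which implicitly still rests on Thompson's Theorem 2) and makes transparent why $2k\le\max\set{m,n}$ is the exact threshold between disc and ring. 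For case (I), however, you rely on the same realizability theorem as the paper, and there your write-up has one small gloss: the theorem produces a compression $M^{*}\Sigma N$ whose singular values all equal $r$, not the matrix $rI_{k}$ itself; as the paper does explicitly, one must still write $M^{*}\Sigma N=W_{1}(rI_{k})W_{2}^{*}$ with $W_{1},W_{2}$ unitary and absorb them via $(MW_{1})^{*}\Sigma(NW_{2})=rI_{k}$ --- a one-line fix, but it should be stated. Finally, your remark about plateaus in (II) is not merely a caveat but a genuine correction to the paper: if $\sigma_{m+n-2k+1}=\sigma_{k}$ while $k>\frac{m+n+1}{3}$, then $\phi_{k}(A)$ is the circle $\set{z : \abs z=\sigma_{k}}$ rather than the empty set, so the paper's inference that $k>\frac{m+n+1}{3}$ forces $\sigma_{k}<\sigma_{m+n-2k+1}$ tacitly assumes the relevant singular values are distinct.
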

\begin{proof}
\textbf{I.}
Consider that $A=U\Sigma V^{*}$ is the singular value decomposition of $A$, then by Proposition 13(I),
$\phi_{k}(A)=\phi_{k}(\Sigma)$. If $z\in\phi_{k}(\Sigma)$, then for suitable $m\times k$ and $n\times k$ unitary matrices
$M$ and $N$ we have $zI_{k}=M^{*}\Sigma N$. Denoting by $\tilde{U}=\begin{bmatrix}
                                                                      M & M_{1} \\
                                                                    \end{bmatrix}$
and $\tilde{V}=\begin{bmatrix}
                                                                      N & N_{1} \\
                                                                    \end{bmatrix}$
the augmented unitary square matrices, then the singular values of matrix
\begin{equation}\label{pr42}
\tilde{U}^{*}\Sigma\tilde{V}=\begin{bmatrix}
                                M^{*}\Sigma N & M^{*}\Sigma N_{1} \\
                                M_{1}^{*}\Sigma N & M_{1}^{*}\Sigma N_{1} \\
                                \end{bmatrix}
\end{equation}
are also $\sigma_{1}\geq\sigma_{2}\geq\ldots\geq\sigma_{\min\set{m,n}}$ and the singular values of
submatrix $M^{*}\Sigma N=zI_{k}$ are equal to $\beta_{1}=\beta_{2}=\ldots=\beta_{k}=\abs z$. Thus, by
Th.1 in \cite{Thompson}, we have
\begin{equation}\label{pr43}
\begin{array}{ccc}
 \sigma_{i} & \geq & \!\!\!\!\!\!\!\!\!\!\!\!\!\!\!\!\!\!\!\!\!\!\!\!\!\!\!\!\!\!\!\!\!\!\!\!\!\!\!\!\!\!\!\!\!\!\!\!\!\!\!\!\!
 \beta_{i}=\abs z\,\,,\quad\quad for \,\,\, i=1, \ldots, k,  \\
 \beta_{i} & \geq & \sigma_{i+m+n-2k}\,\,,\,\,\,\,\, for \,\,\,i=1, \ldots, \min\set{2k-m, 2k-n}.
\end{array}
\end{equation}
Since $k\leq\frac{m+n+1}{3}$, then clearly $\sigma_{m+n-2k+1}\leq\sigma_{k}$. The validity of all
inequa\-lities \eqref{pr43} confirms $\sigma_{m+n-2k+1}\leq\abs z\leq\sigma_{k}$ and by the circular property
of $\phi_{k}(\Sigma)$ in Prop. 13(II) we have that $z$ belongs to the ring $\mathcal{R}(0;\sigma_{m+n-2k+1},\sigma_{k})$.

Conversely, if $z\in\mathcal{R}(0;\sigma_{m+n-2k+1},\sigma_{k})$, then
\[
\sigma_{\min\set{m,n}}\leq\ldots\leq\sigma_{m+n-2k+1}\leq \abs z\leq\sigma_{k}\leq\sigma_{k-1}\leq\ldots\leq\sigma_{1}
\]
and by Th.2 in \cite{Thompson}, we have that there exist $m\times k$ and $n\times k$ unitary
$M\in\M_{m,k}$ and $N\in\M_{n,k}$ such that $\beta_{1}=\ldots=\beta_{k}=\abs z$ are the singular
values of the submatrix $M^{*}\Sigma N$ in \eqref{pr42}. Due to the singular values of $zI_{k}$
and $M^{*}\Sigma N$ being identified, the matrices are related by the equation
\[
  W_{1}(zI_{k})W_{2}^{*}=M^{*}\Sigma N
\]
where $W_{1}$, $W_{2}$ are $k\times k$ unitary matrices. Hence, we have
$(MW_{1})^{*}\Sigma(NW_{2})=$ $zI_{k}$, yielding that $z\in\phi_{k}(\Sigma)$.

Note that, for $k=\frac{m+n+1}{3}\,\,\Rightarrow\,\,\sigma_{k}=\sigma_{m+n-2k+1}$, i.e. the ring
is dege\-nerated to the circle $\set{z: \abs z=\sigma_{k}}$.

\textbf{II.}
If $k>\frac{m+n+1}{3}$, then $\sigma_{k}<\sigma_{m+n-2k+1}$ and should be
$z\in\set{z: \abs z\leq\sigma_{k}}\cap\set{z: \abs z\geq\sigma_{m+n-2k+1}}=\emptyset$. Therefore,
$\phi_{k}(A)=\emptyset$.

\textbf{III.}
To the case $k\leq\max\set{\frac{m}{2},\frac{n}{2}}$, obviously $\min\set{2k-m, 2k-n}\leq 0$ and then
only inequalities $\sigma_{i}\geq\beta_{i}=\abs z$ for $i=1, \ldots, k$ are valid, establishing
$\phi_{k}(A)=\mathcal{D}(0,\sigma_{k})$.
\end{proof}

\begin{cor}
Let $A\in\M_{m,n}$ and $\sigma_{1}\geq\ldots\geq\sigma_{\min\set{m,n}}$ be its singular values.
If\,\, $\max\set{\frac{m}{2},\frac{n}{2}}<k\leq \frac{m+n+1}{3}$ and $\sigma_{r}=0$, for
$k\leq r\leq m+n-2k+1$, then $\phi_{k}(A)$
coincides with the circular disc $\mathcal{D}(0,\sigma_{k})$.
\end{cor}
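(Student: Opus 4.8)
The plan is to derive the corollary as a direct consequence of Proposition~14(I). Under the stated hypothesis $\max\set{\frac{m}{2},\frac{n}{2}}<k\leq\frac{m+n+1}{3}$, Proposition~14(I) already tells us that $\phi_{k}(A)$ equals the ring $\mathcal{R}(0;\sigma_{m+n-2k+1},\sigma_{k})$, so the entire task reduces to identifying what this ring becomes once we impose the extra assumption that $\sigma_{r}=0$ for all $r$ with $k\leq r\leq m+n-2k+1$.

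First I would observe that the index $m+n-2k+1$ lies within the range $[k,\,m+n-2k+1]$ on which the singular values are assumed to vanish; indeed $r=m+n-2k+1$ is the right endpoint of that range, so the hypothesis directly yields $\sigma_{m+n-2k+1}=0$. This is the inner radius of the ring in Proposition~14(I).

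Next I would substitute this value into the description of the ring. The ring $\mathcal{R}(0;\sigma_{m+n-2k+1},\sigma_{k})$ is by definition the annulus $\set{z\in\Complex:\sigma_{m+n-2k+1}\leq\abs z\leq\sigma_{k}}$. Setting the inner radius $\sigma_{m+n-2k+1}=0$ collapses the lower bound, so the annulus degenerates to $\set{z\in\Complex:\abs z\leq\sigma_{k}}=\mathcal{D}(0,\sigma_{k})$. Hence $\phi_{k}(A)=\mathcal{D}(0,\sigma_{k})$, which is exactly the claimed conclusion.

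I do not expect a genuine obstacle here, since the corollary is essentially a specialization of the preceding proposition; the only point requiring a sentence of care is confirming that the hypothesis $\sigma_r = 0$ on the stated index range does include the specific index $m+n-2k+1$ that appears as the inner radius (which it does, as that index equals the upper endpoint $r=m+n-2k+1$ of the range). One might also remark, for completeness, that the ordering $k\leq m+n-2k+1$ needed for the hypothesis to be nonvacuous is guaranteed by $k\leq\frac{m+n+1}{3}$, consistent with the indexing used in Proposition~14(I).
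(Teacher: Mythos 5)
Your proposal is correct and follows essentially the same route as the paper: invoke Proposition~14(I) to get $\phi_{k}(A)=\mathcal{R}(0;\sigma_{m+n-2k+1},\sigma_{k})$, observe that the hypothesis forces the inner radius $\sigma_{m+n-2k+1}$ to vanish (the index $m+n-2k+1$ being the right endpoint of the range where the singular values are zero), and conclude the ring collapses to $\mathcal{D}(0,\sigma_{k})$. The paper additionally splits off the case $r=k$, where $\sigma_{k}=0$ and the disc degenerates to the origin, but your uniform treatment covers that case as well.
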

\begin{proof}
Apparently, by Proposition 14(I), $\phi_{k}(A)=\mathcal{R}(0;\sigma_{m+n-2k+1},\sigma_{k})$.
Since index $r$ satisfies $k< r\leq m+n-2k+1$, we have $\sigma_{k}\geq 0\geq\sigma_{m+n-2k+1}$
and then $\phi_{k}(A)=\mathcal{D}(0,\sigma_{k})$.
To the case $k=r$, $\sigma_{k}=\sigma_{r}=0$ and $\phi_{k}(A)$ is degenerated to the origin.
\end{proof}

We remark here that if $\norm A_{2}=\sigma_{1}$ with multiplicity $k$, as it is stated in Corollary 15,
and $\sigma_{l}=0$ for $k< l\leq m+n-2k+1$, then $\phi_{k}(A)=\mathcal{D}(0,\sigma_{1})$.
The boundary points of this disc are reached, using the eigenvectors of $A^{*}A$ corresponding to $\sigma_{1}^{2}$.

\begin{prop}
Let the matrix $A\in\M_{m,n}$. If $\Lom$ is $(m-k+1)$-dimensional subspace of
$\Complex^{m}$ and $\G$ is $(n-k+1)$-dimensional subspace of
 $\Complex^{n}$, then for any positive integer $k\geq 1$
\[
\phi_{k}(A)\subseteq\bigcap_{\Lom}w(P_{\Lom}A) \quad and\quad
\phi_{k}(A)\subseteq\bigcap_{\G}w(AQ_{\G})
\]
where the numerical range $w(\cdot)$ has been defined in \eqref{w2} and $P_{\Lom}, Q_{\G}$ are
orthogonal projectors onto $\Lom$ and $\G$, respectively.
\end{prop}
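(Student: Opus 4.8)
The plan is to start from the equivalent description of $\phi_k(A)$ furnished by Proposition 11(III): given $z\in\phi_k(A)$, there exist orthonormal matrices $M\in\M_{m,k}$ and $N\in\M_{n,k}$ with $M^{*}AN=zI_{k}$. Setting $\J=Im(M)\subseteq\Complex^{m}$ and $\K=Im(N)\subseteq\Complex^{n}$, both spaces have dimension $k$. The whole argument rests on a dimension count, which I regard as the conceptual heart of the proof: since $\dim\J+\dim\Lom=k+(m-k+1)=m+1>m$, the subspaces $\J$ and $\Lom$ must intersect in a nonzero vector, and likewise $\dim\K+\dim\G=n+1>n$ forces $\K\cap\G\neq\set0$. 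Everything after securing these nontrivial intersections is a direct verification.

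For the first inclusion I would pick a unit vector $u\in\J\cap\Lom$ and write $u=M\tilde u$, where $\norm{\tilde u}=1$ because $M^{*}M=I_{k}$. The decisive algebraic fact is that, since $u\in\Lom$ and $P_{\Lom}$ is an orthogonal (hence self-adjoint idempotent) projector, $P_{\Lom}u=u$ and therefore $u^{*}P_{\Lom}=u^{*}$. Taking $y=u$ and $x=N\tilde u$, which is again a unit vector by $N^{*}N=I_{k}$, I would compute
$$y^{*}(P_{\Lom}A)x=u^{*}P_{\Lom}AN\tilde u=u^{*}AN\tilde u=\tilde u^{*}M^{*}AN\tilde u=\tilde u^{*}(zI_{k})\tilde u=z,$$
so that $z\in w(P_{\Lom}A)$ by definition \eqref{w2}. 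As $\Lom$ was an arbitrary $(m-k+1)$-dimensional subspace of $\Complex^{m}$, this yields $z\in\bigcap_{\Lom}w(P_{\Lom}A)$.

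The second inclusion is entirely symmetric: choosing a unit vector $v\in\K\cap\G$, writing $v=N\tilde v$ with $\norm{\tilde v}=1$, and using $Q_{\G}v=v$, I would take $x=v$ and $y=M\tilde v$ to obtain $y^{*}(AQ_{\G})x=\tilde v^{*}M^{*}AQ_{\G}v=\tilde v^{*}M^{*}Av=\tilde v^{*}(zI_{k})\tilde v=z$, whence $z\in w(AQ_{\G})$ for every such $\G$. The only point demanding a little care is applying the projector identity on the correct side, namely $u^{*}P_{\Lom}=u^{*}$ in the first case versus $Q_{\G}v=v$ in the second; this is precisely the reason the factor $P_{\Lom}$ sits to the left of $A$ while $Q_{\G}$ sits to the right.
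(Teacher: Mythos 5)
Your proof is correct, and its skeleton is the same as the paper's: take $z\in\phi_k(A)$, use the equivalent description from Proposition 11, produce a nonzero vector in the intersection of the associated $k$-dimensional subspace with $\Lom$ (resp.\ $\G$) by the dimension count $k+(m-k+1)>m$, and then exploit $P_{\Lom}u=u$ (resp.\ $Q_{\G}v=v$) together with self-adjointness of the projector. The one genuine difference is which item of Proposition 11 you start from, and it works in your favor. The paper argues from item (V): it takes unit vectors $u\in\Lom'$, $v\in\G'$ with $z=\seq{Av,u}$, picks $\tilde u\in\Lom\cap\Lom'$, and asserts $z=\seq{Av,\tilde u}$ for the \emph{same} $v$ --- but the identity $\seq{Av,u}=z\norm{v}\norm{u}$ cannot hold for arbitrary pairs $u\in\Lom'$, $v\in\G'$ (replace $u$ by $-u$ to see it would force $z=0$), so it silently relies on re-matching $v$ to the new vector $\tilde u$. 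You instead start from item (III), write the intersection vector as $u=M\tilde u$, and explicitly choose the partner $x=N\tilde u$ carrying the same coordinate vector $\tilde u$, so that $\tilde u^{*}M^{*}AN\tilde u=\tilde u^{*}(zI_k)\tilde u=z$ is airtight. In short, your route is the same in spirit but handles precisely the matching subtlety that the paper's appeal to (V) glosses over; nothing is missing from your argument.
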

\begin{proof}
Assume $z\in\phi_{k}(A)$. By Proposition 11(V) there
exist subspaces $\Lom'$ and $\G'$ of $\Complex^{m}$ and
$\Complex^{n}$, respectively, with $\dim\Lom'=\dim\G'=k$, such that
$z=\seq{Av,u}$ for unit vectors $v\in\G',
u\in\Lom'$. Then, following the arguments in \cite{Choi},
for a unit vector $\tilde{u}\in\Lom\cap\Lom'$ we readily see that
$z=\seq{Av,\tilde{u}}=\seq{Av,P_{\Lom}\tilde{u}}=\seq{P_{\Lom}^{*}Av,\tilde{u}}\in w(P_{\Lom}A)$, where
$P_{\Lom}$ is orthogonal projector of $\Complex^{m}$ onto $\Lom$.
Hence, $\phi_{k}(A)\subseteq\bigcap_{\Lom}\set{w(P_{\Lom}A) :
\,P_{\Lom}\,\,orthogonal \,\,projector\,\,onto\,\Lom}$.

Similarly, considering the subspace $\G$ of dimension $n-k+1$ we conclude the second inclusion.
\end{proof}

\textbf{Remark.}
It is worth noticing, finally, the containment
$$\mathcal{D}(0,\sigma_{k}(A))\subseteq\bigcap_{\dim{\mathcal{G}}=n-k+1}{w(AQ_{\mathcal{G}})}
=\mathcal{D}(0,\min_{\mathcal{G}}{\norm{AQ_{\mathcal{G}}}_{2}}),$$
since \cite[p.148]{H.J.T}
$$\min_{\mathcal{G}\subseteq\Complex^{n}}{\norm{AQ_{\mathcal{G}}}_{2}}=
\min_{\mathcal{G}}{\max{\set{\norm{AQ_{\mathcal{G}}x}_{2}:
x\in\Complex^{n}, \norm x_{2}=1}}}$$
$$\quad\quad\quad\quad\quad
\quad\quad\quad\quad\,\,\,\,\geq\min_{\mathcal{G}}{\max{\set{\norm{AQ_{\mathcal{G}}x}_{2}:
x\in \mathcal{G}, \norm x_{2}=1}}}=\sigma_{k}(A).$$

\bibliographystyle{amsplain}
\bibliography{xbib}

\end{document}